\theoremstyle{plain}
\newtheorem{theorem}{Theorem}[section]
\newtheorem{lemma}[theorem]{Lemma}
\newtheorem{corollary}[theorem]{Corollary}
\theoremstyle{definition}
\newtheorem{remark}[theorem]{Remark}
\newtheorem{defn}[theorem]{Definition}
\numberwithin{equation}{section}
\def \F{\mathcal F}
\def \mc{\mathcal}
\def \inv{^{-1}}
\def \real{\mathbb{R}}
\def \mff{\mathsf}
\def \indexn{^{(k)}}
\def \half{\frac{1}{2}}
\def \b{\bar }
\def \p{\partial}
\def\t{\mathfrak{t}}
              \def \F{{\mathcal F}}
\def \p{\partial}
\def \v {\vskip 0.1in}
\def \n {\noindent}
\def \F{\mathcal F}
\def \mc{\mathcal}
\def \inv{^{-1}}
\def \v{\vskip 0.1in}
\def \n{\noindent}
\def \real{\mathbb{R}}
\def \indexn{^{(k)}}
\def \half{\frac{1}{2}}
\def \b{\bar }
\def \p{\partial}
\def \indexm {_{k} }
\def\t{\mathfrak{t}}
\begin{document}

\title{Some Estimates for a Generalized Abreu's Equation }
\author[Li]{An-Min Li}
\address{Department of Mathematics,
Sichuan University,
 Chengdu, 610064, China}
\email{anminliscu@126.com}
\author[Lian]{Zhao Lian}
\address{Department of Mathematics,
Sichuan University,
Chengdu, 610064, China}
\author[Sheng]{Li Sheng}
\address{Department of Mathematics,
Sichuan University,
Chengdu, 610064, China}
\email[Corresponding author]{lshengscu@gmail.com}
\thanks{ Li acknowledges the support of NSFC Grant NSFC11521061. \\
${}\quad\ $Sheng acknowledges the support of NSFC Grant NSFC11471225.}
{\abstract
We study a generalized Abreu equation and derive some estimates.
\endabstract
}

\maketitle
 
\n
\textbf{Keywords.}
generalized Abreu Equation



\section{Introduction}\label{Sec-Intro}
 This is one of a sequence of   papers, aiming at generalizing the results of Chen, Li and Sheng to homogeneous toric bundles. In this paper we establish some estimates.
\v

Let $\mathfrak{t}$ be a linear space of dimension $n$, $\mathfrak{t}^*$ be its dual space. We identify $\mathfrak{t}$ and $\mathbb{R}^n$ with coordinates $x=(x^1,...,x^n)$, and  identify $\mathfrak{t}^*$ and $\mathbb{R}^n$ with coordinates $\xi=(\xi_1,...,\xi_n)$. Let $\Omega^*\subset \mathfrak{t}^*$ be a bounded convex domain, $u(\xi_1,...,\xi_n)$ be a $C^{\infty}$ strictly convex function defined in $\Omega^*$ satisfying the following nonlinear fourth-order partial differential equation
\begin{equation}\label{eqn 1.1}
\frac{1}{\mathbb{D}}\sum_{i,j=1}^n\frac{\partial^2 \mathbb{D} u^{ij}}{\partial \xi_i\partial \xi_j}=-A.
\end{equation}
Here, $\mathbb{D}(\xi)>0$ and $A(\xi)$ are two given smooth functions defined on $\overline{\Omega}^*$ and $(u^{ij})$ is the inverse of the Hessian matrix $(u_{ij})$.

The equation \eqref{eqn 1.1} was introduced by Donaldson \cite{D5} and Raza \cite{R}
in the study of the scalar curvature of toric fibration, see also \cite{N-1}. We call \eqref{eqn 1.1} a generalized Abreu equation. In this paper we derive some estimates for $u$, which will be used in our next works in the study of prescribed scalar curvature problems on homogeneous toric bundles.
\v

\section{Preliminaries}\label{sec-Prliminaries}



\smallskip

Let $f=f(x)$ be a smooth and strictly convex function defined in a
convex domain  $\Omega\subset \mathfrak{t}$.
As $f$ is strictly
convex, $G_f$ defined by
$$
G_f=\sum_{i,j}\frac{\partial^{2}f}{\partial
x^{i}\partial x^{j}}d x^{i} dx^{j}= \sum_{i,j} f_{ij}d x^{i} dx^{j}
$$
is a Riemannian metric in $\Omega$. The gradient of $f$
defines a (normal) map $\nabla^f$ from $\mathfrak{t}$ to $\mathfrak{t}^*$:
$$
\xi=(\xi_1,...,\xi_n)=\nabla^f(x) =\left(\frac{\partial f}{\partial
x^{1}},...,\frac{\partial f}{\partial x^{n}}\right).
$$
The function $u$ on $\mathfrak{t}^*$
$$
u(\xi)=x\cdot\xi - f(x)
$$
is called the {\it Legendre transform} of $f$. We write
$$u=L(f),\;\;\; \Omega^\ast=\nabla^f(\Omega)\subset \mathfrak{t}^*.$$
Conversely, $f=L(u).$ It is well-known that $u(\xi)$ is a smooth and strictly convex function.
Corresponding to $u$, we have the metric
$$G_u=\sum_{i,j}\frac{\partial^{2}u}{\partial
\xi_{i}\partial \xi_{j}}d \xi_i d\xi_j=\sum_{i,j} u_{ij}d\xi_id\xi_j.$$
Under the normal map $\nabla^f$, we have
\begin{align*}
\tfrac{\partial \xi_i}{\partial x^{k}}=\tfrac{\partial^2 f}{\partial x^{i}\partial x^{k}},\;\;\;\;
\left(\tfrac{\partial^2 f}{\partial x^{i}\partial x^{k}}\right)=\left(\tfrac{\partial^2 u}{\partial \xi_i\partial \xi_k}\right)^{-1},\;\;\det\left(\tfrac{\partial^2 f}{\partial x^{i}\partial x^{k}}\right)
=\det\left(\tfrac{\partial^2 u}{\partial \xi_i\partial \xi_k}\right)^{-1}.\end{align*} Then,
$$(\nabla^f)^*(G_u)=\sum_{i,j}\frac{\partial^{2}f}{\partial
x^{i}\partial x^{j}}d x^{i} dx^{j}=G_f,$$
i.e., $ \nabla^f: (\Omega, G_f)\to (\Omega^*,
G_u) $ is locally isometric.
\v\n
Let $ \rho=\left[\det(f_{ij})\right]^{-\frac{1}{n+2}}, $ we
introduce the following affine invariants:
\begin{equation}\label{eqn_2.1}
\Phi=\frac{\|\nabla\rho\|^2_{G_f}}{\rho^2}=\frac{1}{(n+2)^2}\|\nabla \log\det(u_{ij})\|_{G_{u}} \end{equation}
\begin{equation}\label{eqn_2.2}
4n(n-1)J=\sum f^{il}f^{jm}f^{kn}f_{ijk}f_{lmn}= \sum
u^{il}u^{jm}u^{kn}u_{ijk}u_{lmn},
\end{equation}
\begin{equation}\label{eqn_2.3}
\Theta = J + \Phi.
\end{equation}
In particular, let $\Omega^*=\Delta\subset \mathfrak{t}^*$ be a Delzant polytope, $M$ be the associate toric variety. Set
$$w^i= x^{i} + \sqrt{-1}y^i,\;\;z^i=e^{w^i/2}.$$
Let $\omega_{f}$ be a K\"{a}hler metric with a local potential function $f$. The Ricci curvature
and the scalar curvature of $\omega_{f}$ are given by
$$
R_{i\b j}= - \frac{\partial^{2}}{\partial z^{i}\partial \bar
z^{j}} \left(\log \det\left(f_{k\bar l}\right)\right), \;\;\;\mc
S=\sum f^{i\bar j}R_{i\b j},
$$
respectively. When we use the log-affine coordinates,   the Ricci curvature
and the scalar curvature can be written as
$$
R_{i\b j}= - \frac{\partial^{2}}{\partial x^{i}\partial x^{j}}
\left(\log \det\left(f_{kl}\right)\right), \;\;\;\mc S=-\sum
f^{ij}\frac{\partial^{2}}{\partial x^{i}\partial x^{j}} \left(\log
\det\left(f_{kl}\right)\right).
$$
Define
\begin{equation}\label{eqn_1.1c}
\mc K\;=\;\|Ric\|_f +\|\nabla Ric\|_f
^{\frac{2}{3}}+\|\nabla^2 Ric\|_f
^{\frac{1}{2}}.
\end{equation}
Put
$$
R_{ij}= - \frac{\partial^{2}}{\partial x^{i}\partial x^{j}}
\left(\log \det\left(f_{kl}\right)\right), \;\;\;\mc S=-\sum
f^{ij}\frac{\partial^{2}}{\partial x^{i}\partial x^{j}} \left(\log
\det\left(f_{ij}\right)\right).$$
In terms of $\xi$ and $u(\xi)$ the scalar curvature can be written as
\begin{equation}\label{eqn 2.4}
\sum_{i,j=1}^n\frac{\partial^2 u^{ij}}{\partial \xi_i\partial \xi_j}=-\mathcal{S}.
\end{equation}
\eqref{eqn 2.4} is called the Abreu's Equation.
\v\n
For a given smooth function $\mathbb{D}(\xi)>0$ defined on $\overline{\Omega}^*$ we define an operator $\mathcal{S}_{\mathbb{D}}$ as
\begin{equation}\label{eqn 2.5}
\mathcal{S}_{\mathbb{D}}(u)=-\frac{1}{\mathbb{D}}\sum_{i,j=1}^n\frac{\partial^2 \mathbb{D} u^{ij}}{\partial \xi_i\partial \xi_j} .
\end{equation}
 Let $A(\xi)$ be given smooth function defined on $\overline{\Omega}^*$, we study the generalized Abreu equation.
\begin{equation}\label{eqn 2.7}
\frac{1}{\mathbb{D}}\sum_{i,j=1}^n\frac{\partial^2 \mathbb{D} u^{ij}}{\partial \xi_i\partial \xi_j}=-A.
\end{equation}
Set
$$
\mathbb{F}:=\frac{\mathbb D}{\det(u_{ij})},\;\;\;U^{ij}= \det(u_{kl})u^{ij}.$$
Through the (normal) map $\nabla u$ we can view $\mathbb{D}$ and $\mathbb{F}$ to be smooth functions on the toric variety $M$.
\begin{lemma}\label{lemma_2.6} The generalized Abreu equation \eqref{eqn 2.7} is equivalent to any of the following  equations:
\begin{equation}\label{eqn 2.8}
\sum_{i,j} U^{ij}\mathbb F_{ij}  =-\mathbb D A,
\end{equation}
\begin{equation}\label{eqn 2.9}
\sum_{i,j} u^{ij}\frac{\p^2(\log \mathbb F)}{\p \xi_i \p \xi_j} +\sum_{i,j}u^{ij}\frac{\p (\log \mathbb F)}{\p \xi_i}\frac{\p (\log \mathbb F)}{\p \xi_j} = -A,
\end{equation}
\begin{equation}\label{eqn 2.10}
\mathcal{S}_{\mathbb{D}}(f):=-\sum_{i,j} f^{ij}\frac{\p^2(\log \mathbb F)}{\p x^{i} \p x^{j}} -\sum_{i,j}f^{ij}\frac{\p (\log \mathbb F)}{\p x^{i}}\frac{\p (\log \mathbb D)}{\p x^{j}}=A.
\end{equation}
\end{lemma}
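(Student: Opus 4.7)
The plan is to chain three elementary computations: (2.7)$\Leftrightarrow$(2.8) by the divergence-free property of cofactors; (2.8)$\Leftrightarrow$(2.9) by dividing out $\mathbb{D}$ and using $\mathbb{F}_{ij}/\mathbb{F}=(\log\mathbb{F})_{ij}+(\log\mathbb{F})_i(\log\mathbb{F})_j$; and (2.9)$\Leftrightarrow$(2.10) via the Legendre change of variables $\xi\leftrightarrow x$.

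First, using $U^{ij}=\det(u_{kl})u^{ij}$ and $\mathbb{F}=\mathbb{D}/\det(u_{ij})$, I would rewrite $\mathbb{D} u^{ij}=\mathbb{F}\,U^{ij}$. The classical identity that the cofactor matrix of a Hessian is divergence-free, $\sum_i \partial_{\xi_i}U^{ij}=0$ for every $j$ (and similarly in $j$ by symmetry), then gives
\[
\sum_{i,j}\frac{\partial^2(\mathbb{D}u^{ij})}{\partial\xi_i\partial\xi_j}
=\sum_{i,j}\frac{\partial^2(\mathbb{F}U^{ij})}{\partial\xi_i\partial\xi_j}
=\sum_{i,j}U^{ij}\mathbb{F}_{ij}.
\]
Substituting into \eqref{eqn 2.7} and multiplying by $\mathbb{D}$ yields \eqref{eqn 2.8}. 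Dividing \eqref{eqn 2.8} by $\mathbb{D}=\mathbb{F}\det(u_{ij})$ converts $U^{ij}$ back into $u^{ij}$, so $\sum u^{ij}\mathbb{F}_{ij}/\mathbb{F}=-A$, and expanding $\mathbb{F}_{ij}/\mathbb{F}$ as the second derivative of $\log\mathbb{F}$ plus the square of its first derivatives gives \eqref{eqn 2.9}.

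For the step to \eqref{eqn 2.10}, I would use the Legendre relations $(u_{ij})=(f_{ij})^{-1}$, i.e. $u^{ij}=f_{ij}$ and $u_{ij}=f^{ij}$, together with $\partial/\partial\xi_i=\sum_k u_{ik}\partial/\partial x^k$. A direct chain-rule calculation for any smooth function $P$ on the toric variety gives
\[
\sum_{i,j}u^{ij}P_{\xi_i\xi_j}=\sum_{k,l}f^{kl}P_{x^k x^l}+\sum_k\bigl(\partial_{\xi_k}\log\det(u_{ij})\bigr)P_{x^k},
\]
where the extra term comes from differentiating $u_{ki}$ and using $\sum u^{ij}u_{kij}=\partial_{\xi_k}\log\det(u_{ij})$. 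Since $\log\mathbb{F}=\log\mathbb{D}-\log\det(u_{ij})$, one rewrites $\partial_{\xi_k}\log\det(u_{ij})=(\log\mathbb{D})_{\xi_k}-(\log\mathbb{F})_{\xi_k}$. The isometry of $\nabla^f$ further gives $\sum u^{ij}(\log\mathbb{F})_{\xi_i}(\log\mathbb{F})_{\xi_j}=\sum f^{kl}(\log\mathbb{F})_{x^k}(\log\mathbb{F})_{x^l}$, and similarly $\sum_k(\log\mathbb{D})_{\xi_k}(\log\mathbb{F})_{x^k}=\sum f^{kl}(\log\mathbb{D})_{x^l}(\log\mathbb{F})_{x^k}$, $\sum_k(\log\mathbb{F})_{\xi_k}(\log\mathbb{F})_{x^k}=\sum f^{kl}(\log\mathbb{F})_{x^k}(\log\mathbb{F})_{x^l}$.

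Substituting into \eqref{eqn 2.9}, the two quadratic terms in $\log\mathbb{F}$ cancel and the surviving terms are $\sum f^{kl}(\log\mathbb{F})_{x^k x^l}+\sum f^{kl}(\log\mathbb{D})_{x^l}(\log\mathbb{F})_{x^k}=-A$, which rearranges to \eqref{eqn 2.10}. The main obstacle is purely bookkeeping in the last step: keeping track of which index is a $\xi$-derivative versus an $x$-derivative, and verifying that the cross-terms combine precisely so that the quadratic $(\log\mathbb{F})$-term is replaced by the mixed $(\log\mathbb{F})(\log\mathbb{D})$-term. All implications are reversible, so the three equations are equivalent to \eqref{eqn 2.7}.
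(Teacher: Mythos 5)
Your proof is correct and follows essentially the same route as the paper's: (2.7)$\Leftrightarrow$(2.8)$\Leftrightarrow$(2.9) via the divergence-free cofactor identity $\sum_i U^{ij}_i=0$, and (2.9)$\Leftrightarrow$(2.10) via the same operator identity relating $\sum u^{ij}\partial_{\xi_i}\partial_{\xi_j}$ to $\sum f^{ij}\partial_{x^i}\partial_{x^j}$ plus a first-order term. The only difference is cosmetic — you keep the first-order coefficient as $\partial_{\xi_k}\log\det(u_{ij})$ and convert afterwards, while the paper writes it directly in $x$-derivatives using $\log\det(f_{kl})=\log\mathbb{F}-\log\mathbb{D}$; the cancellation of the quadratic $(\log\mathbb{F})$-terms is the same.
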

\begin{proof}  Since $\sum_{i} U^{ij}_{i}=0,$ the equation \eqref{eqn 2.7} can be written as \eqref{eqn 2.8} and \eqref{eqn 2.9}. Note that
\begin{align*}
\sum u^{ij}\frac{\p ^2 }{\p \xi_{i} \p \xi_j}&=\sum f^{ij}\frac{\p ^2 }{\p x^{i} \p x^{j}}-\sum f^{ij}\frac{\p}{\p x^{i}}\log \det(f_{kl})\frac{\p}{\p x^{j}} \\
& = \sum f^{ij}\frac{\p ^2 }{\p x^{i} \p x^{j}}-\sum f^{ij}\frac{\p\log \mathbb F }{\p x^{i}}\frac{\p}{\p x^{j}}+\sum f^{ij}\frac{\p\log \mathbb D }{\p x^{i}} \frac{\p}{\p x^{j}}.
\end{align*}
We can re-write \eqref{eqn 2.9} in the coordinates  $(x^{1},\cdots, x^{n})$ as \eqref{eqn 2.10}.
\end{proof}

Let $\omega_g$ be the Guillemin metric with local potential function ${\bf g}$. For any $T^n$-invariant metric $\omega\in [\omega_g]$ with local potential function ${\bf f}$, there is a function $\phi$ globally defined on $M$ such that
$${\bf f}={\bf g} + \phi.$$
Set
$$
\mc C^\infty(M,\omega_g)=\{\mathsf f|\mathsf f=\mathsf g+\phi,\; \phi\in C^\infty_{\mathbb T^{2}}(M) \mbox{ and }\omega_{\mathsf g+\phi}>0\}.
$$
  Fix a large constant $K_o>0$. We set
$$
\mc C^\infty(M,\omega_g;K_o)
=\{\mathsf{f}\in \mc C^\infty(M,\omega_g)|
|\mc S_{\mathbb{D}}(f)|\leq K_o\}.
$$
Choose a local coordinate system $z^1,...,z^n$. Denote
$$
H:=\frac{\det(g_{i\bar{j}})}{\det(f_{i\bar{j}})}
.$$
It is known that $H$ is a global function defined on $M$.
Set
$$\mathbb{H}:=\frac{\mathbb{F}_g}{\mathbb{F}_f}=
\frac{\mathbb{D}_g}{\mathbb{D}_f}H,$$
where
$$\mathbb{D}_g:=(\nabla^g)^*\mathbb{D},\;\;\;\mathbb{D}_f:=(\nabla^f)^*\mathbb{D},\;\;\;\mathbb{F}_{g}=\mathbb{D}_g\det(g_{ij}),\;\;\;\mathbb{F}_{f}=\mathbb{D}_f\det(f_{ij}).$$

\section{Upper bound of H}

 In this section we assume that $\Omega^*=\Delta\subset \mathbb \mathfrak{t}^*$ is a Delzant polytope, and $M$ is the associate toric variety. Let $\mu:M\rightarrow \bar{\Delta}\subset \mathfrak{t}^*$ be the moment map. We introduce notations:
$$R_g= \max\limits_{M}\left\{|\sum g^{ij}\left(\log \mathbb{F}_g\right)_{ij}|+|\nabla_{x}\log\mathbb{F}_g|^2\right\},$$
$$\mathcal{D}=\max\limits_{\bar{\Delta}}\left\{|\nabla_{\xi}\log \mathbb D |,\right\}\;\;\;\mathcal{R}:=\max\left\{ R_g, \mathcal{D}^2,(diam(\Delta))^2\right\},$$
where $\left|\nabla_{x} \log\mathbb  F_{g}\right|^2 =\sum\left(\frac{\p \log\mathbb  F_{g}}{\p x^{i}}\right)^2$ and  $ \left|\nabla_{\xi} \log\mathbb  D\right|^2 =\sum\left(\frac{\p \log\mathbb  D}{\p\xi_{i}}\right)^2$.   We prove
\begin{theorem}\label{theorem 3.1}  For any $\phi\in C^{\infty}(M,\omega_{g})$ we have
\begin{equation}\label{eqn_3.2}
H \leq C \exp
\left\{(2\mathcal{R}+1)(\max_M\{\phi\}-\min_M\{\phi\})\right\},
\end{equation}
where $C$ is a constant depending only on $n$, $\max|\mathcal{S}_{\mathbb{D}}(f)|$ and $\mathcal R$. Here  we denote $\mathcal{S}_{\mathbb{D}}(f)=\mathcal{S}_{\mathbb{D}}(\phi)$.
\end{theorem}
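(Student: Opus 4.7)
The strategy is to bound $\log H$ via the maximum principle applied to the auxiliary function
$$\Psi := \log H - (2\mathcal{R}+1)\bigl(\phi - \min_M \phi\bigr),$$
which is smooth on the compact toric variety $M$. If I can show $\max_M \Psi \leq C_0$ for some $C_0$ depending only on $n$, $\max|\mathcal{S}_{\mathbb{D}}(f)|$, and $\mathcal{R}$, then for any $q\in M$,
$$\log H(q) = \Psi(q) + (2\mathcal{R}+1)\bigl(\phi(q)-\min_M \phi\bigr) \leq C_0 + (2\mathcal{R}+1)\bigl(\max_M\phi - \min_M\phi\bigr),$$
which is exactly \eqref{eqn_3.2}. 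It therefore suffices to control $\Psi$ at a maximum point $p_0$; I will work in the log-affine coordinates $x^1,\dots,x^n$ on the open orbit, treating the toric boundary by a standard approximation argument if the max is achieved there.

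First I would estimate $\Delta_f \phi$, where $\Delta_f := \sum f^{ij}\partial_i\partial_j$. Since $\phi_{ij}=f_{ij}-g_{ij}$,
$$\Delta_f \phi = n - \sum_{i,j} f^{ij} g_{ij}.$$
The matrix $f^{-1}g$ is positive definite with determinant $H$, so by AM--GM $\sum f^{ij}g_{ij} = \mathrm{tr}(f^{-1}g) \geq n\,H^{1/n}$, giving $-\Delta_f \phi \geq n(H^{1/n}-1)$. This is the driving force that forces $H$ to be bounded at the maximum of $\Psi$: when $H(p_0)$ is very large, $-\Delta_f\phi(p_0)$ is very positive, and this has to be balanced by a corresponding lower bound on $\Delta_f\log H$.

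Next I would compute $\Delta_f \log H$ using the generalized Abreu equation. Writing
$$\log H = (\log \mathbb{F}_g - \log \mathbb{D}_g) - (\log \mathbb{F}_f - \log \mathbb{D}_f)$$
and invoking \eqref{eqn 2.10} for $f$ to eliminate $\Delta_f \log\mathbb{F}_f$ in favor of $-A$ plus a cross-product of gradients, I obtain an identity for $\Delta_f \log H$ in which the only ``bad'' terms are $\Delta_f \log \mathbb{F}_g$, $\Delta_f \log \mathbb{D}_g$, $\Delta_f \log \mathbb{D}_f$, and various gradient cross-products of $\log\mathbb{F}_f, \log\mathbb{F}_g, \log\mathbb{D}_f, \log\mathbb{D}_g$. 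The $f^{ij}$-Hessian of $\log\mathbb{D}$ is bounded using $\mathcal{D}$ and the $x$-Hessian of $\log\mathbb{D}$ (finite since $\mathbb{D}\in C^\infty(\overline{\Delta})$), multiplied by a factor that can be absorbed into $\sum f^{ij}g_{ij}$. The term $\Delta_f \log\mathbb{F}_g$ is converted from its $g$-counterpart, which is bounded by $R_g$, by a matrix Cauchy--Schwarz estimate involving $\sum f^{ij}g_{ij}$, producing a bound of the form $|\Delta_f \log\mathbb{F}_g| \leq R_g \sum f^{ij}g_{ij}$ modulo lower-order terms. The remaining gradient cross-products are estimated by $\mathcal{R}\bigl(1+\sum f^{ij}g_{ij}\bigr)$ via Cauchy--Schwarz together with $|\nabla_\xi\log\mathbb{D}|\leq\mathcal{D}$ and $|\nabla_x\phi|\leq\mathrm{diam}(\Delta)$ on the image of the moment map. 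Combining everything, the inequality $\Delta_f \Psi \leq 0$ at $p_0$ reduces, after some simplification, to a linear inequality in $\sum f^{ij}g_{ij}$ whose coefficient on the left is $(2\mathcal{R}+1)n - 2\mathcal{R}\cdot n$, i.e.\ strictly positive; this yields an absolute bound on $\sum f^{ij}g_{ij}(p_0) \geq n H(p_0)^{1/n}$, hence on $H(p_0)$ and on $\Psi(p_0)$.

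The main technical obstacle is the second step — carefully converting $\Delta_f\log\mathbb{F}_g$ from $g$-data to $f$-data, and tracking all gradient cross-products so that the numerical constant multiplying $H^{1/n}$ ends up precisely as $(2\mathcal{R}+1)$. This is exactly what dictates the exponent appearing in \eqref{eqn_3.2}, and it is the delicate part of the argument; the choice of $(2\mathcal{R}+1)$ rather than $2\mathcal{R}$ is what creates the small positive slack needed to close the maximum principle.
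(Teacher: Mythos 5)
Your overall strategy is the same as the paper's: a maximum principle applied to a function of the form ``$\log(\text{determinant ratio}) - C\phi$,'' with the PDE inserted at the maximum, $\sum f^{ij}g_{ij}$ used as the quantity that drives the estimate via AM--GM, and $C=2\mathcal{R}+1$ chosen to leave positive slack. The paper's test function is $\exp\{-C\phi\}\,\mathbb{H}$, and in logarithmic form this is your $\Psi$ with $\log H$ replaced by $\log\mathbb{H}$ and shifted by a constant, so at that level you are doing the same thing. The difference is not cosmetic, however, and it is where your argument has a real gap.

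You decompose
$$
\log H = (\log\mathbb{F}_g - \log\mathbb{D}_g) - (\log\mathbb{F}_f - \log\mathbb{D}_f),
$$
and then use \eqref{eqn 2.10} only for the $\log\mathbb{F}_f$ piece, leaving $\Delta_f\log\mathbb{D}_f$ and $\Delta_f\log\mathbb{D}_g$ as ``bad'' terms to be bounded directly. Your claimed bound --- that the $f^{ij}$-Hessian of $\log\mathbb{D}$ is controlled ``using $\mathcal{D}$ and the $x$-Hessian of $\log\mathbb{D}$ (finite since $\mathbb{D}\in C^\infty(\overline\Delta)$), multiplied by a factor that can be absorbed into $\sum f^{ij}g_{ij}$'' --- does not hold. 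Writing $\psi=\log\mathbb{D}$ and $\mathbb{D}_f(x)=\mathbb{D}(\nabla f(x))$, a direct chain-rule computation gives
$$
\Delta_f\log\mathbb{D}_f \;=\; \sum_{k,l}\psi_{kl}\,f_{kl} \;+\; \sum_k \psi_k\,\frac{\partial\log\det(f_{ij})}{\partial x^k}.
$$
The first term is $\operatorname{tr}\bigl(\nabla^2_\xi\log\mathbb{D}\cdot f_{ij}\bigr)$, which scales like $\operatorname{tr}(f_{ij})=\sum_i\lambda_i$; this has nothing to do with $\sum f^{ij}g_{ij}=\sum_i\mu_i/\lambda_i$ (e.g.\ let $\lambda_1\to\infty$ with $\lambda_2$ fixed: the first blows up while the second stays bounded), so it cannot be ``absorbed'' there. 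The second term requires a bound on $\nabla_x\log\det(f_{ij})$, which is precisely the kind of quantity the theorem is in the business of controlling, and which is not among the hypotheses. (The parenthetical ``finite since $\mathbb{D}\in C^\infty(\overline\Delta)$'' only gives boundedness of the $\xi$-Hessian $\psi_{kl}$; the $x$-Hessian of $\mathbb{D}_f$ still carries factors of $f_{ij}$.) The same objection applies to $\Delta_f\log\mathbb{D}_g$.

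The paper sidesteps this entirely by working with $\mathbb{H}=\mathbb{F}_g/\mathbb{F}_f$ instead of $H=\det(g_{ij})/\det(f_{ij})$: then $f^{ij}(\log\mathbb{H})_{ij}=\Delta_f\log\mathbb{F}_g-\Delta_f\log\mathbb{F}_f$, the PDE \eqref{eqn 2.10} eliminates $\Delta_f\log\mathbb{F}_f$ wholesale (it is never split into a $\det$ piece and a $\mathbb{D}$ piece), and the only $\mathbb{D}$-terms left are \emph{first-order} gradients $\sum f^{ij}(\log\mathbb{D}_f)_i(\cdot)_j$, which contract to $\partial\log\mathbb{D}/\partial\xi_j$ and are bounded by $\mathcal{D}$. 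Since $\mathbb{D}_g/\mathbb{D}_f$ is uniformly bounded above and below, replacing $\log H$ by $\log\mathbb{H}$ in your $\Psi$ costs only an additive constant in the final inequality, so the repair is routine; but as written, the step bounding $\Delta_f\log\mathbb{D}_f$ is a genuine gap.

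One further caution (affecting both your sketch and the paper's write-up equally): you bound $\Delta_f\log\mathbb{F}_g$ by $R_g\sum f^{ij}g_{ij}$ ``via matrix Cauchy--Schwarz.'' This requires control on the full eigenvalues of the $g$-normalized Hessian of $\log\mathbb{F}_g$, whereas $R_g$ as defined only bounds the $g$-trace $|\sum g^{ij}(\log\mathbb{F}_g)_{ij}|$. You should either strengthen the definition of $R_g$ to include the operator norm of $g^{-1}\nabla^2\log\mathbb{F}_g$, or note that it is harmless because $g$ is a fixed reference metric on the compact $M$, so such an eigenvalue bound can be folded into the constant; either way, make it explicit.
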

\v\n
{\bf Proof.} Consider the function
$$\F := \exp\{-C\phi\}\mathbb{H},$$
where $C$ is a constant to be determined later. $\F$ attains its
maximum at a point $p^*\in M$. We have, at $p^*$,
\begin{equation}\label{eqn_3.3}
-C \frac{\p\phi}{\p z^i} + \frac{\p \log \mathbb{H}}{\p z^{i}}=0,\end{equation}
\begin{equation}\label{eqn_3.4}
- C f^{i\bar j} \phi_{i\bar j}+f^{i\bar j}(\log \mathbb{H})_{i\bar j}\leq 0.\end{equation}
If $\mu(p^*)\in \Delta$ we use the affine log coordinates. Note that for any smooth function $F$ depending only on $x$ we have
 $$
 \frac{1}{2} \frac{\p F}{\p x^{i}} = \frac{\p F}{\p w^{i}} = \frac{\p F}{\p z^{i}}\frac{\p z^{i}}{\p w^{i}} =\frac{ z^{i} }{2} \frac{\p F}{\p z^{i}} ,
 $$
If $\mu(p^*)\in \p \Delta,$ we can choose $q^{*}$ such that $\mu(q^*)\in \Delta$ very close to $p^*$ such that, at $q^{*}$,
\begin{equation}\label{eqn_3.5}
|-C \phi_{i} + (\log \mathbb{H})_{i}|\leq \epsilon,\end{equation}
\begin{equation}\label{eqn_3.6}
- C f^{ij} \phi_{ij}+f^{ij}(\log \mathbb{H})_{ij}\leq \epsilon.\end{equation}
where $F_{i}=\frac{\p F}{\p x^{i}},F_{ij}=\frac{\p^2 F}{\p x^{i}\p x^{j}},etc.$ From \eqref{eqn_3.5} we get
\begin{equation}\label{eqn_3.7}
|(\log \mathbb F_f)_{i}-(\log \mathbb F_g)_{i} + C(f-g)_{i}|\leq \epsilon.
\end{equation}
Inserting \eqref{eqn 2.10} and \eqref{eqn_3.7} into \eqref{eqn_3.6} we have
\begin{align}\label{eqn_3.2}
&Cf^{ij}g_{ij}-Cn + A+ \sum_{i,j}f^{ij} (\log \mathbb D_f)_{i}(\log \mathbb F_g)_{j}+\sum f^{ij}\left(\log \mathbb{F}_g\right)_{ij} \\
&-C\sum_{i,j}\left|f^{ij} (\log \mathbb D_f)_{i}(f-g)_{j}\right| \leq \epsilon (1+\sum_{i,j}\left|f^{ij} (\log \mathbb D_f)_{i}\right| ). \nonumber
\end{align}
By an orthogonal transformation we can choose the
coordinates $x^{1},...,x^{n}$ such that, at $q^*$,
$$f_{ij}= \lambda_i \delta_{ij},\;\;\;g_{ij}= \mu_i
\delta_{ij}.$$
Note that
\begin{align*}
&|\sum f^{ij} (\log \mathbb D_f)_{i}|=\left|\frac{\p }{\p \xi_j}(\log \mathbb D)\right|\leq \mathcal{D},\\
&|2\sum_{i,j}f^{ij} (\log \mathbb D_f)_{i}(\log \mathbb F_g)_{j}|= \left|2\sum_{i,j} \frac{\p\log \mathbb D}{\p \xi_j}(\log \mathbb F_g)_{j}\right| \leq 2\mathcal R
\end{align*}
 $$\left|\frac{\p }{\p x^{j}}(f-g)\right|\leq diam(\Delta),\;\;\;
|\sum g^{ij}\left(\log \mathbb{F}_g\right)_{ij}|\leq \mathcal{R}.$$
From \eqref{eqn_3.2}, we get
$$C\left( \tfrac{\mu_1}{\lambda_1} + \cdot\cdot\cdot +
\tfrac{\mu_n}{\lambda_n}\right) - Cn + A - \left(
\tfrac{\mu_1}{\lambda_1} + \cdot\cdot\cdot +
\tfrac{\mu_n}{\lambda_n}\right) \mathcal{R}-C_1 \leq  \epsilon(1+n\mathcal{D})$$
for some constant $C_1=3\mathcal R.$
We choose $C = 2\mathcal{R}+1$ and
apply an elementary inequality
$$
\frac{1}{n}\left(\frac{\mu_1}{\lambda_1} + \cdot\cdot\cdot +
\frac{\mu_n}{\lambda_n}\right)\geq \left(\frac{\mu_1\cdots\mu_n}{\lambda_1\cdots
\lambda_n}\right)^{1/n}
$$
to get
$$n(\mathcal{R}+1) H^{\tfrac{1}{n}} \leq n(2\mathcal{R}+1) +  |A|+C_{1},$$ where we use the arbitrariness of $\epsilon.$
As $\mathbb{D}$ is bounded below and above on $\bar{\Delta}$,
it follows that, at $p^*$,
$$\exp\{-C\phi\}\mathbb{H} \leq C\left(2+\frac{|A|+3 \mathcal R }{n(\mathcal{R}+1)}\right)^n\exp\{-(2\mathcal{R}+1)\min_M\{\phi\}\}.$$
for some constant $C>0$. Then \eqref{eqn_3.3} follows. $\Box$
\v
Donaldson \cite{D4} derived a $L^{\infty}$ estimate for the Abreu's equation and $n=2$. His method can be applied directly to the
generalized Abreu Equation ( see also \cite{N-1}).
\begin{theorem}\label{theorem_3.2}
Let $n=2$ and $\Delta\subset \mathfrak{t}^{*}$ be a Delzant polytope, $\mathbb{D}>0$ and $A$ be two smooth functions defined on $\bar\Delta$. Let $u\in C^{\infty}(\Delta,v)$ satisfying \eqref{eqn 1.1}. Suppose that $\Delta$ is $(\mathbb{D},A,\lambda)$-stable. Then there is a constant
$\mff C_o>0$, depending on $\lambda$, $\Delta$, $\mathbb{D}$ and $\|\mathcal{S}_{\mathbb{D}}(u)\|_{C^0}$, such that $|\max\limits_{\bar \Delta} u-\min\limits_{\bar \Delta} u|\leq \mff C_o$.
\end{theorem}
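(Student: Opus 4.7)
The plan is to carry Donaldson's $L^\infty$ argument from the Abreu case over to the weighted equation \eqref{eqn 1.1}.

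Since \eqref{eqn 1.1} and the Guillemin class $C^\infty(\Delta,v)$ are unchanged by adding a constant to $u$, I first normalize by subtracting $\min_{\bar\Delta} u$ so that $u\geq 0$ and $\min_{\bar\Delta} u = 0$; the conclusion then reduces to an upper bound on $\max_{\bar\Delta} u$. Pairing \eqref{eqn 2.7} with $u$ and integrating by parts twice on $\Delta$, the boundary behavior of $u^{ij}$ built into $C^\infty(\Delta,v)$ (which collapses the boundary contributions onto Donaldson's canonical measure $d\sigma$ on $\partial\Delta$) together with $u^{ij}u_{ij}=n$ yields the weighted Donaldson identity
\begin{equation*}
n\int_{\Delta}\mathbb D\,d\mu \;=\; \int_{\partial\Delta} u\,\mathbb D\, d\sigma - \int_{\Delta} A\,u\,\mathbb D\, d\mu \;=:\; \mathcal L_{\mathbb D,A}(u).
\end{equation*}

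Next I would invoke $(\mathbb D,A,\lambda)$-stability in Donaldson's standard form $\mathcal L_{\mathbb D,A}(f)\geq \lambda\int_{\partial\Delta} f\,d\sigma$, valid for normalized non-negative convex $f$. Applied to the normalized $u$ this converts the identity into an a priori boundary bound
\begin{equation*}
\int_{\partial\Delta} u\,d\sigma \;\leq\; \frac{n}{\lambda}\int_{\Delta}\mathbb D\,d\mu,
\end{equation*}
with a constant of exactly the form required by the conclusion.

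Finally, to upgrade this $L^1(\partial\Delta)$ bound to the desired $L^\infty(\bar\Delta)$ bound I would reproduce Donaldson's planar argument: since $u\geq 0$ is convex on the two-dimensional polytope $\Delta$, any interior value $u(p)$ is controlled by an average of boundary values along a one-parameter family of chords through $p$, yielding $\max_{\bar\Delta} u\leq C(\Delta)\int_{\partial\Delta} u\,d\sigma$. This is the main obstacle, and the only place where the hypothesis $n=2$ is used essentially: in higher dimensions pure convexity does not give an $L^\infty$ bound from a boundary $L^1$ bound, so Donaldson's method and the present adaptation are genuinely two-dimensional.
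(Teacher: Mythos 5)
The paper itself gives no proof of Theorem~\ref{theorem_3.2}: it cites Donaldson \cite{D4} (and Nyberg \cite{N-1}) and simply asserts that the $n=2$ argument carries over to the weighted equation. So the only fair comparison is with Donaldson's method. Your first two steps do reproduce its skeleton: integrating \eqref{eqn 2.7} against $u$ (after normalizing $\min_{\bar\Delta}u=0$) and using the boundary behavior of $u^{ij}$ in $\mc C^\infty(\Delta,v)$ to collapse the boundary terms onto $d\sigma$ gives an identity of the form $\mc L_{\mathbb D,A}(u)=n\int_\Delta\mathbb D\,d\mu$, and $(\mathbb D,A,\lambda)$-stability then yields a uniform bound on $\int_{\partial\Delta}u\,d\sigma$.

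The gap is your last step. The assertion that convexity of $u\geq 0$ with interior minimum gives $\max_{\bar\Delta}u\leq C(\Delta)\int_{\partial\Delta}u\,d\sigma$ in dimension two is false. The chord-averaging argument does control $u(p)$ for $p$ in a compact subset of $\Delta^\circ$, but the constant degenerates as $p\to\partial\Delta$, and the maximum of a convex function on $\bar\Delta$ is attained at a vertex, where no one-parameter family of chords through the point is available. Concretely, on $\Delta=[0,1]^2$ with barycenter $p_0=(1/2,1/2)$, the convex function $u_\epsilon(\xi)=N\max(0,\xi_1+\xi_2-2+\epsilon)$ satisfies $u_\epsilon\geq0$, $u_\epsilon(p_0)=0$, $\max_{\bar\Delta}u_\epsilon=N\epsilon$ (at the vertex $(1,1)$), while $\int_{\partial\Delta}u_\epsilon\,d\sigma=N\epsilon^2$; so $\max_{\bar\Delta}u_\epsilon \big/ \int_{\partial\Delta}u_\epsilon\,d\sigma=1/\epsilon$ is unbounded as $\epsilon\to0$. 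Passing from the boundary $L^1$ bound to the $L^\infty$ bound is the genuinely nontrivial core of Donaldson's argument, and it uses more than convexity: it exploits that $u$ solves the equation and lies in the Guillemin class $\mc C^\infty(\Delta,v)$. That is precisely the step you would need to carry out in detail (and check that it survives the insertion of the weight $\mathbb D$); as written your proposal is missing it.
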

\v
Combining Theorem \ref{theorem 3.1} and Theorem \ref{theorem_3.2} we get
\begin{theorem}\label{theorem 3.3} Let $n=2$ and $\Delta\subset \mathfrak{t}^{*}$ be a Delzant polytope, $M$ be the associate toric variety. Given two smooth functions $\mathbb{D}>0$ and $A$ defined on $\bar{\Delta}$. Let $u\in \mathbf{S}$ be a solution of the generalized   Abreu's Equation \eqref{eqn 1.1}. Suppose that $\Delta$ is $(\mathbb{D}, A, \lambda)$ stable. Then there is a constant $\mff C_1>0$ depending on $diam(\Delta)$, $\mathbb{D}$ and $\lambda$, such that the following holds everywhere on $M$
\begin{equation}
H\leq \mff C_1.\end{equation}
\end{theorem}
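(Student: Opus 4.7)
The strategy is to combine Theorem \ref{theorem 3.1}, which bounds $H$ in terms of the oscillation of the K\"ahler potential difference $\phi = f - g$ on $M$, with Theorem \ref{theorem_3.2}, which bounds the oscillation of the symplectic potential $u$ on $\bar{\Delta}$. The missing link is Legendre duality, which I would use to convert the latter oscillation bound into the former.

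First I would exploit the Legendre identities
$$f(x) \;=\; \sup_{\xi \in \bar{\Delta}} \bigl(x \cdot \xi - u(\xi)\bigr), \qquad g(x) \;=\; \sup_{\xi \in \bar{\Delta}} \bigl(x \cdot \xi - u_g(\xi)\bigr),$$
where $u_g = L(g)$ is the Legendre transform of the Guillemin potential $g$. Using the maximizer of one supremum as a trial point in the other yields the pointwise two-sided bound $\min_{\bar{\Delta}}(u_g - u) \leq \phi(x) \leq \max_{\bar{\Delta}}(u_g - u)$ for every $x$, hence
$$\max_M \phi - \min_M \phi \;\leq\; \max_{\bar{\Delta}}(u - u_g) - \min_{\bar{\Delta}}(u - u_g) \;\leq\; \mathrm{osc}(u) + \mathrm{osc}(u_g).$$
Now Theorem \ref{theorem_3.2} furnishes $\mathrm{osc}(u) \leq \mff C_o$ under the $(\mathbb{D},A,\lambda)$-stability hypothesis, while $\mathrm{osc}(u_g)$ is a constant determined by $\Delta$ alone, since $u_g$ is Guillemin's explicit canonical symplectic potential. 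Combining these gives $\max_M \phi - \min_M \phi \leq C_2$ for some $C_2 = C_2(\mathrm{diam}(\Delta), \mathbb{D}, \lambda)$.

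Finally, since the constant $\mathcal{R}$ that enters Theorem \ref{theorem 3.1} depends only on $\Delta$ and $\mathbb{D}$, substituting this oscillation bound into that theorem produces $H \leq C \exp\{(2\mathcal{R}+1) C_2\} =: \mff C_1$ globally on $M$, as required. The main technical point is the Legendre-duality step: one needs to handle the additive gauge freedom $u \mapsto u + \langle a,\xi\rangle + c$ (corresponding to translations of $f$ in log-affine coordinates) and the logarithmic boundary behaviour of $u_g$ along $\partial \Delta$, but both issues are dealt with by working directly with the supremum formulas above, which remain valid on all of $\bar{\Delta}$ by convex duality and whose difference already absorbs the boundary singularities of the symplectic potentials.
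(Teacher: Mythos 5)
Your proposal is correct and follows exactly the paper's route: Theorem~\ref{theorem_3.2} controls $\mathrm{osc}_{\bar\Delta}(u)$, and Theorem~\ref{theorem 3.1} converts an oscillation bound on $\phi$ into a bound on $H$; the paper simply states ``combining Theorem~\ref{theorem 3.1} and Theorem~\ref{theorem_3.2}'' with no further detail. The Legendre-duality bridge you supply --- using the maximizer of one supremum as a competitor in the other to get $\min_{\bar\Delta}(u_g-u)\le\phi\le\max_{\bar\Delta}(u_g-u)$, hence $\mathrm{osc}_M(\phi)\le\mathrm{osc}_{\bar\Delta}(u-u_g)$ --- is precisely the step the paper leaves implicit, and your handling of the additive gauge and of the shared $\sum l_i\log l_i$ boundary behavior is sound.
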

\v
\section{Estimates of the determinant}
\label{sect_4}

\begin{defn}\label{definition_3.1.1}
A convex domain $\Omega\subset R^{n} $ is called normalized when its center of mass is 0 and $n^{-\frac{3}{2}}D_1(0) \subset  \Omega  \subset
D_1(0).$
\end{defn}

\begin{lemma}\label{lemma_4.1}
Let $\Omega\subset \mathbb{R}^2$ be a bounded
convex domain, and $f$ be a smooth and strictly convex function defined on $\Omega$ with $$f|_{\partial \Omega}=C,\;\;\;\;\inf_{\Omega}f=0.$$ Assume that $f$ satisfies the equation \eqref{eqn 2.10}.
Suppose that
\begin{equation}\label{eqn 4.1}
\sup_{\Omega}|\nabla f|\leq \mff N_{1},\;\;\;\;\;\;\; \sup_{\Omega}\left|\nabla_{\xi} \log\mathbb  D\right|\leq \mff N_{1}
\end{equation}
for some constant $\mff N_{1}>0.$   Then the following
estimate holds:
\begin{equation}\label{eqn 4.2}
\exp\left\{-\tfrac{4C}{C-f}\right\}\det(f_{ij})\leq \mff C_{2},\;\;\;\forall x\in \Omega,
\end{equation}
where $\mff C_{2}$ is a constant depending only on $C$, $\mff N_{1}$ and
 $\max_{\Omega} A$.
\end{lemma}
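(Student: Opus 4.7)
The plan is to apply an interior maximum-principle argument to the logarithmic test function
$$V \;=\; \log\det(f_{ij})\;-\;\frac{4C}{C-f}.$$
Since $f = C$ on $\partial\Omega$, the barrier term drives $V\to-\infty$ near $\partial\Omega$, so the supremum of $V$ must be attained at some interior point $p_0\in\Omega$. At $p_0$ the vanishing-gradient condition $V_i=0$ yields
$$(\log\det f_{ij})_i(p_0)\;=\;\frac{4Cf_i}{(C-f)^2},$$
and the second-derivative condition $f^{ij}V_{ij}(p_0)\leq 0$ rearranges, after differentiating the barrier twice, to
$$f^{ij}(\log\det f_{ij})_{ij}(p_0)\;\leq\;\frac{4Cn}{(C-f)^2}\;+\;\frac{8Cf^{ij}f_if_j}{(C-f)^3}.$$

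The next step is to combine this with the generalized Abreu equation \eqref{eqn 2.10}. Writing $\log\det f_{ij}=\log\mathbb{F}-\log\mathbb{D}$, equation \eqref{eqn 2.10} gives
$$f^{ij}(\log\det f_{ij})_{ij}\;=\;-A\;-\;f^{ij}(\log\mathbb{F})_i(\log\mathbb{D})_j\;-\;f^{ij}(\log\mathbb{D})_{ij}.$$
I would then substitute the critical-point identity $(\log\mathbb{F})_i=(\log\mathbb{D})_i+4Cf_i/(C-f)^2$ and use the chain rule relating $x$- and $\xi$-derivatives, in particular the clean identity $f^{ij}(\log\mathbb{D})_j=(\log\mathbb{D})_{\xi_i}$, which is how the hypothesis $|\nabla_\xi\log\mathbb{D}|\leq\mathfrak{N}_1$ enters. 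After this reorganisation, the cross contributions $f^{ij}f_i(\log\mathbb{D})_j=\sum_i\xi_i(\log\mathbb{D})_{\xi_i}$ are immediately bounded by $\mathfrak{N}_1^2$ using \eqref{eqn 4.1} together with $|\xi|=|\nabla f|\leq\mathfrak{N}_1$.

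The main obstacle will be the two remaining second-order contributions of $\mathbb{D}$, namely $f^{ij}(\log\mathbb{D})_i(\log\mathbb{D})_j=\sum_{k,\ell}(\log\mathbb{D})_{\xi_k}(\log\mathbb{D})_{\xi_\ell}f_{k\ell}$ and $f^{ij}(\log\mathbb{D})_{ij}$, each of which involves a trace of the Hessian $(f_{ij})$ and is therefore not controlled by \eqref{eqn 4.1} alone. I would handle these via a Cauchy inequality, absorbing a fraction of them into the favourable quantity $8Cf^{ij}f_if_j/(C-f)^3$ and using the bound on $\mathbb{D}_{\xi_k\xi_\ell}$ coming from smoothness of $\mathbb{D}$ on the bounded image $\Omega^*\subset B_{\mathfrak{N}_1}$. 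The $n=2$ assumption enters crucially through the AM--GM inequality $\mathrm{tr}(f_{ij})\geq 2(\det f_{ij})^{1/2}$, which converts the resulting mixed inequality into an explicit upper bound on $\det(f_{ij})(p_0)$. Combining this with $V(x)\leq V(p_0)$ for all $x\in\Omega$ and exponentiating produces \eqref{eqn 4.2}.
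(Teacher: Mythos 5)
Your test function $V = \log\det(f_{ij}) - \frac{4C}{C-f}$ omits the crucial auxiliary term $\epsilon\sum(\p f/\p x^i)^2$ that the paper puts in the exponent of its test function $F=\exp\{-\frac{4C}{C-f}+\epsilon\sum f_i^2\}\mathbb F$, and this is a genuine gap, not a shortcut. Tracing the second derivatives of $\epsilon\sum f_i^2$ against $f^{ij}$ produces the term $2\epsilon\sum f_{ii}=2\epsilon\,\mathrm{tr}(f_{ij})$, which in dimension two is (via AM--GM) the \emph{only} quantity in the maximum-principle inequality that grows like $\sqrt{\det(f_{ij})}$ and can therefore force an upper bound on $\det(f_{ij})$. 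With your simpler $V$, after inserting \eqref{eqn 2.10}, the critical-point identity $(\log\det f_{ij})_i=\tfrac{4Cf_i}{(C-f)^2}$, and the chain rule $f^{ij}(\log\mathbb D)_j=(\log\mathbb D)_{\xi_i}$, the second-order condition reduces to
$$
-A - \frac{8C}{(C-f)^2}\sum_i f_i(\log\mathbb D)_{\xi_i} - \sum_{k,\ell}\frac{\mathbb D_{\xi_k\xi_\ell}}{\mathbb D}\,f_{k\ell}
\;\leq\;
\frac{8C}{(C-f)^2} + \frac{8C\,f^{ij}f_if_j}{(C-f)^3},
$$
in which every term is either bounded by \eqref{eqn 4.1}, or of indefinite sign (the $\mathbb D_{\xi_k\xi_\ell}$-trace), or a trace of the \emph{inverse} Hessian. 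Nothing here controls $\det(f_{ij})$ from above; in the classical case $\mathbb D\equiv\mathrm{const}$ the display collapses to $-A\le\text{bounded}$, which is vacuous. The term $8Cf^{ij}f_if_j/(C-f)^3$ is also not the ``favourable quantity'' you describe: it sits on the permitted side of the inequality with a plus sign, so it only loosens the bound, and since $f^{ij}f_if_j\le\mff N_1^2\,\mathrm{tr}(f^{ij})$ it actually becomes small when $\det(f_{ij})$ is large; there is nothing to absorb into it. A secondary problem is that by working with $\log\det(f_{ij})$ rather than $\log\mathbb F=\log\mathbb D+\log\det(f_{ij})$ you inherit $f^{ij}(\log\mathbb D)_{ij}$, whose leading contribution $\sum_{k,\ell}(\log\mathbb D)_{\xi_k\xi_\ell}f_{k\ell}$ requires a bound on the second $\xi$-derivatives of $\mathbb D$ --- a quantity the constant $\mff C_2$ is not permitted to depend on. The paper avoids this by running the maximum principle on $\mathbb F$ itself, for which \eqref{eqn 2.10} gives $f^{ij}(\log\mathbb F)_{ij}=-A-\sum(\log\mathbb F)_i(\log\mathbb D)_{\xi_i}$ directly with no second derivative of $\mathbb D$ appearing, and the $\epsilon$-term then supplies the positive $2\epsilon\sum f_{ii}$ against which all the remaining bounded quantities are weighed.
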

\v\n
{\bf Proof.}  Consider the function
\begin{equation}
 F:=\exp\left\{-\frac{4C}{C-f}+\epsilon\sum \left(\frac{\p f}{\p x^{i}}\right)^2\right\}\mathbb F .
 \end{equation}
  Clearly, $F$
attains its supremum at some interior point $p^*$. At
$p^*$, we have,
\begin{align}
&-g f_{i}+2\epsilon\sum f_{j}f_{ij}+(\log \mathbb F)_{i}=0 \label{eqn_4.4} \\
& -2g -g'\sum f^{ij}f_{i}f_{j}+2\epsilon\sum f_{ii} +2\epsilon\sum f_{i}f^{kl}f_{kli}\nonumber\\
&+\sum f^{ij}(\log \mathbb F)_{ij} \leq 0.\label{eqn 4.5}
\end{align}
where $g=\frac{4C}{(C-f)^2}.$ By \eqref{eqn_4.4}, at
$p^*$, we have
\begin{align*}
2\epsilon\sum f_{i}f^{kl}f_{kli}&=2\epsilon\sum f_{i}\frac{\p\log \det(f_{kl})}{\p x^{i}}=2\epsilon\sum f_{i}\frac{\p\log \mathbb F }{\p x^{i}}- 2\epsilon\sum f_{i}f_{ij}\frac{\p\log \mathbb D}{\p \xi_{j}} \\
&=2\epsilon g\sum f^2_{i}-4\epsilon^2\sum f_{ij}f_{i}f_{j} - 2\epsilon\sum f_{i}f_{ij}\frac{\p\log \mathbb D}{\p \xi_{j}} ,
\end{align*}
and
\begin{align*}
\sum f^{ij}(\log \mathbb F)_{ij}&=-A -\sum f^{ij}(\log \mathbb F)_{i}(\log \mathbb D)_{j}=-A -\sum  (\log \mathbb F)_{i}\frac{\p \log \mathbb D }{\p \xi_{i}} \\
&= -A+2\epsilon f_{i}f_{ij}\frac{\p\log \mathbb D}{\p \xi_{j}}-\sum gf_{i}\frac{\p\log \mathbb D}{\p \xi_{i}}.
\end{align*}
Inserting these into \eqref{eqn 4.5}, using \eqref{eqn 4.1}, choosing $\epsilon<\frac{1}{4\mff N_{1}^2}$,  we have
\begin{align*}
-2g-\mff N_{1}^2g'\sum f^{ii}+\epsilon \sum f_{ii}-A-\mff N_{1}^2 g
  \leq 0.
\end{align*}
Let $\lambda_1,\lambda_{2}$ be the eigenvalues of $(f_{ij})$. Then we have
\begin{equation}
-2g-\mff N_{1}^2 g'\frac{\lambda_1+\lambda_2}{\lambda_1\lambda_{2}}+\epsilon(\lambda_1+\lambda_{2})-A- \mff N_{1}^2g \leq 0.
\end{equation}
Since $\frac{\lambda_{1}+\lambda_{2}}{2}\geq\sqrt{\lambda_1\lambda_2}$, one can easily obtain that
\begin{equation}
\epsilon\lambda_{1}\lambda_{2}\leq (2g+\max_{\Omega} A +\mff N_{1}^2 g )\sqrt{\lambda_{1}\lambda_{2}}+2\mff N_{1}^2g'.
\end{equation}
 By $\exp\{-\frac{4C}{C-f}\}(g^2+g')\leq C_{2}$ and Cauchy inequalities we have
$$
F\leq C_{3}
$$
where $C_{3}>0$ is a constant depending only on $\mff N_{1}$ and $\max_{\Omega} A$.
Then the  lemma follows. $\Box$
\v
By the same method in \cite{CLS5} we can prove the following two lemmas. For reader's convenience we sketch the proofs here.

\begin{lemma}\label{lemma_4.2} Let $\Omega^*\subset \mathbb{R}^{n}$ be a normalized  convex domain,
and $\mathbb{D}>0$, $A$ be two smooth functions defined on $\bar\Omega^*$.
Let $u$ be a smooth and strictly convex function defined in $\Omega^*$ with $$u|_{\partial \Omega^*}=C,\;\;\;\;\inf_{\Omega^*}u=u(p)=0.$$
Let $f$ be the Legendre transformation of $u$. Assume that $u$ satisfies the generalized Abreu equation equation \eqref{eqn 2.7}.
Then there is a constant $d\geq 1$, independent of $u$, such that
\begin{equation}\label{eqn_2.1}
\exp \left\{-\frac{4C}{C-u }\right\} \frac{\det
(u_{ij})}{(d+f)^{2n}}\leq \mff C_{3}
\end{equation}
for some constant $\mff C_{3}>0$ depending only on $n$,$max|A|$ and $C$.
\end{lemma}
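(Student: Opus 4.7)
The plan is to run an auxiliary-function maximum principle in the $\xi$-coordinates, parallel to Lemma \ref{lemma_4.1}, but with an additional weight $(d+f)^{-2n}$ that suppresses the possible blow-up of $\det(u_{ij})$ near $\partial\Omega^*$. First I fix $d$: since $\Omega^*$ is normalized and $u(p)=0$, Legendre duality gives $f(\nabla u(\xi))=\xi\cdot\nabla u-u\ge -u\ge -C$ on $\nabla u(\Omega^*)$, so any $d\ge C+1$ makes $d+f\ge 1$ throughout.

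I then consider the auxiliary function
\[
F := \exp\left\{-\frac{4C}{C-u}\right\}\,\frac{\det(u_{ij})}{(d+f)^{2n}}.
\]
The exponential factor vanishes super-exponentially as $\xi\to\partial\Omega^*$, dominating any polynomial blow-up of the remaining factors, so $F$ attains its maximum at an interior point $p^*$. At $p^*$ I would combine the first-order condition $(\log F)_i=0$ with the second-order inequality $\sum u^{ij}(\log F)_{ij}\le 0$, using the two Legendre-duality identities
\[
\sum_j u^{ij}f_j = \xi^i,\qquad \sum_{ij} u^{ij}f_{ij} = n+\sum_k\xi_k (\log\det(u_{lm}))_k,
\]
which follow from $f_i=\sum_k\xi_k u_{ki}$.

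Next I substitute the $\xi$-coordinate form \eqref{eqn 2.9} of the generalized Abreu equation to express $\sum u^{ij}(\log\det(u_{kl}))_{ij}$ as $A+\sum u^{ij}(\log\mathbb{D})_{ij}+\sum u^{ij}(\log\mathbb{F})_i(\log\mathbb{F})_j$, and I use the first-order condition for $F$ to replace $(\log\det u)_i$, hence $(\log\mathbb{F})_i$, by $g'u_i+2nf_i/(d+f)$ with $g:=4C/(C-u)$. The specific exponent $4C/(C-u)$ is chosen so that the positive term $g''\sum u^{ij}u_iu_j$ coming from $-u^{ij}g_{ij}$ absorbs the cross term $2nf_if_j/(d+f)^2$, using $\sum u^{ij}f_if_j=\sum_i\xi^i f_i$ and the bound $|\xi|\le 1$ from normalization. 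After these absorptions, an AM--GM step on the eigenvalues of $(u_{ij})$ — analogous to the $\lambda_1\lambda_2$ step in Lemma \ref{lemma_4.1} but in dimension $n$, whence the power $2n$ — yields $F(p^*)^{1/n}\le \mathrm{const}(n,C,\max|A|)$.

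The main obstacle I anticipate is the bookkeeping of the cross terms produced by the weight $(d+f)^{-2n}$: one has to verify that the ``good'' positive contributions from $g''\sum u^{ij}u_iu_j$ and the nonnegative quadratic $\sum u^{ij}(\log\mathbb{F})_i(\log\mathbb{F})_j$ actually dominate every negative contribution after the first-order condition is used for elimination, and that the final balance is sharp enough for AM--GM to produce exactly $F^{1/n}$ (which is what forces the power $2n$). The $\mathbb{D}$-dependent terms $\sum u^{ij}(\log\mathbb{D})_{ij}$ and $(\log\mathbb{D})_i$ are absorbed into the constant $\mathfrak{C}_3$, so the stated dependence should be understood to tacitly include bounds on $\mathbb{D}$ in $C^2(\overline{\Omega}^*)$, just as $\mathbb{D}$ enters Lemma \ref{lemma_4.1} implicitly through $\mathsf{N}_1$.
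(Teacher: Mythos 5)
Your auxiliary function is missing the essential extra weight in the exponent. The paper's proof uses
\[
F = \exp\left\{-\frac{m}{C-u} + L\right\}\frac{1}{\mathbb F\,(d+f)^{2n}},\qquad
L = \epsilon\,\frac{\sum (x^{k})^2}{(d+f)^{2}},
\]
preceded by a preliminary estimate $\sum (x^{k})^2/(d+f)^{2}\le b$ that makes $L$ uniformly bounded. This $L$ is not optional bookkeeping: since $x^k=u_k$, the leading contribution to $\sum u^{ij}L_{ij}$ is $2\epsilon\,\mathrm{tr}(u_{ij})/(d+f)^{2}$, and this is the only source of a trace term of the shape $\mathrm{tr}(u_{ij})/(d+f)^2$ in the maximum-principle inequality. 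That term is exactly what your AM--GM step needs, since AM--GM converts $\mathrm{tr}(u_{ij})/(d+f)^{2}\le \mathrm{const}$ into $\det(u_{ij})/(d+f)^{2n}\le \mathrm{const}$. In Lemma~\ref{lemma_4.1} the analogous trace term $2\epsilon\sum f_{ii}$ is generated by the weight $\exp\{\epsilon\sum f_i^2\}$, which is admissible there only because $|\nabla f|$ is assumed bounded; Lemma~\ref{lemma_4.2} has no such hypothesis, so $L$ (together with the preliminary bound justifying it) is what replaces it. In your sketch, after eliminating $(\log\mathbb F)_i$ via the first-order condition, the only surviving quadratic in the Hessian of $u$ is $\tfrac{2n}{(d+f)^{2}}\sum u^{ij}f_i f_j=\tfrac{2n}{(d+f)^{2}}\sum_{k,l} \xi_k\xi_l u_{kl}$, which degenerates as $\xi\to 0$ and is not comparable to $\mathrm{tr}(u_{ij})$; the argument therefore cannot close.

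Two further points. You describe $g''\sum u^{ij}u_iu_j$ as a ``positive term coming from $-u^{ij}g_{ij}$'' that absorbs a cross term. With $g=4C/(C-u)$ one has $g''>0$, so the contribution of $-u^{ij}g_{ij}$ to $\sum u^{ij}(\log F)_{ij}\le 0$ is the term $-g''\sum u^{ij}u_iu_j$, which after rearranging becomes a right-hand-side quantity that must itself be dominated; in the paper it is dominated by the $(g')^2\sum u^{ij}u_iu_j$ piece arising when $\sum u^{ij}(\log\mathbb F)_i(\log\mathbb F)_j$ is expanded via the first-order condition (and the specific choice $m=4C$ is what makes $(g')^2\ge -g''\cdot$const on $0\le u<C$). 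It is not a term freely available to absorb others. Finally, the paper's test function carries $1/\mathbb F = \det(u_{ij})/\mathbb D$ rather than $\det(u_{ij})$. This is deliberate: with $1/\mathbb F$ the second-order step invokes \eqref{eqn 2.9} in the clean form $-\sum u^{ij}(\log\mathbb F)_{ij}=A+\sum u^{ij}(\log\mathbb F)_i(\log\mathbb F)_j$, whereas with $\det(u_{ij})$ the unbounded quantity $\sum u^{ij}(\log\mathbb D)_{ij}$, which scales like $\mathrm{tr}(u^{ij})$, enters the computation and would need a separate argument to control.
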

\v\n
{\bf Proof.} We can show as in \cite{CLS5} that there are constants $d\geq 1$, $b>0$ such that
$$\frac{\sum (x^{k})^2}{(d+f)^{2}}\leq b.$$
Consider the following function
$$
F = \exp \left\{-\frac{m}{C-u } + L\right\}\frac{1}{\mathbb{F}(d+f)^{2n}},$$
where
$$
L = \epsilon \frac{\sum (x^{k})^2}{(d+f)^{2}}.
$$
$m$ and $\epsilon$ are positive constants to be determined later.
Clearly,
F attains its supremum at some interior point $p^*$ of $\Omega^*$.
We have, at $p^*$,
\begin{equation}\label{eqn_2.2}
F_{i} = 0,
\end{equation}
\begin{equation}\label{eqn_2.3}
\sum u^{ij}F_{ij} \leq 0,
\end{equation}
where we denote $F_i = \frac{\partial F}{\partial \xi_i}$, $F_{ij} =
\frac{\partial^2 F}{\partial \xi_i\partial \xi_j},f_i =
\frac{\partial f}{\partial \xi_i}$ and so on. Using the PDE \eqref{eqn 2.8} we calculate both
expressions \eqref{eqn_2.2} and \eqref{eqn_2.3} explicitly:
\begin{equation}\label{eqn_2.4}
-\frac{m}{(C-u)^2}u_i + L_i - 2n\frac{f_i}{d+f} - (\log \mathbb{F})_i = 0,
\end{equation}
and
\begin{eqnarray}\label{eqn_2.5}
&&-\frac{2m}{(C-u)^3}\sum u^{ij}u_iu_j - \frac{mn}{(C-u)^2} + \sum u^{ij}{L_{ij}}
\\
&&- 2n \frac{\sum u^{ij}f_{ij}}{d+f}
 + 2n\frac{\sum u^{ij}f_if_j}{(d+f)^2} + \frac{\sum u^{ij}\mathbb{F}_i\mathbb{F}_j}{\mathbb{F}^2} + A \leq 0.
 \nonumber
\end{eqnarray}
We choose  $\epsilon =
\frac{1}{8000n^2b}$, $m= 4C$. Note that $\frac{1}{C_{1}}\leq \mathbb{D}\leq C_{1}$ for some constant $C_{1}>0$. By the same calculation in \cite{CLS5}  we get
$$ \exp \left\{-\frac{m}{C-u } + \epsilon \frac{\sum (x^{k})^2}{(d+
f)^{2}}\right\}\frac{1}{(d+f)^{2n}\mathbb{F}}\leq d_1$$ for some constant
$d_1
>0$ depending on $n$, $b$ and $\mathbb D$. Since $F$ attains its
maximum at $p^*$, the Lemma follows. $\Box$
\v
Let $\Delta\subset \mathbb R^2$ be the Delant polytope defined by
$$
\Delta=\{\xi|l_i(\xi)> 0,\;\;\; 0\leq
i\leq d-1\}
$$
where $l_i(\xi):=\langle\xi,\nu_i\rangle- \lambda_i$ and $\nu_i$ is
the inward pointing normal vector to the edge  $\ell_i$ of $\Delta$.
  Set $v=\sum_{i} l_i\log l_i$ and
$$
\mc C^\infty(\Delta,v)=\{u| u=v+\psi \mbox{ is strictly convex, }
\psi\in C^\infty(\bar \Delta)\}.
$$

\v
\begin{lemma}\label{lemma_2.3.a}
 Let $\Delta\subset \mathbb R^2$ be a Delzant ploytope, $\mathbb{D}>0$ and $A$
be smooth functions on $\bar\Delta$. 
 Suppose that $0\in \Delta^{o}$, and $u\in \mathcal C^{\infty}(\Delta,v)$ satisfies the generalized Abreu equation
\eqref{eqn 2.7} and 
$$u(0)=\inf u,\;\;\;\;\;\nabla u(0)=0.$$
Then,
$$\frac{\det( u_{ij})}{(d+f)^4}(p) \leq \mff C_{4} (d_E(p, \partial
\Delta))^{4},$$
where $\mff C_{4}$ is a positive constant depending $diam(\Delta)$, $\max_{\bar\Delta}\mathbb D$,
$\min_{\bar\Delta}\mathbb D$ and $\|A\|_{L^{\infty}(\Delta)}$. Here $d_{E}(p,\p \Delta)$ denotes the Euclidean distance from $p$ to $\p\Delta.$
 \end{lemma}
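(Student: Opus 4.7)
My plan is to replay the maximum principle argument of Lemma~\ref{lemma_4.2}, but with an auxiliary function twisted by a suitable power of the polytope-defining product $\prod_{k} l_{k}$, so that the resulting bound carries the required power of $d_{E}(p,\partial\Delta)$.

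Since $v=\sum_{k}l_{k}\log l_{k}$ is continuous on $\bar\Delta$ and $u=v+\psi$ with $\psi\in C^{\infty}(\bar\Delta)$, the function $u$ is uniformly bounded on $\bar\Delta$; set $C:=\sup_{\bar\Delta}u + 1$. Imitating Lemma~\ref{lemma_4.2}, I consider
$$F \;=\; \exp\!\left\{-\frac{m}{C-u} \,+\, \epsilon\frac{\sum(x^{i})^{2}}{(d+f)^{2}}\right\}\frac{\bigl(\prod_{k}l_{k}\bigr)^{\alpha}}{\mathbb F\,(d+f)^{2n}},$$
with $\alpha,m,\epsilon>0$ and $d\geq 1$ to be chosen (the bound $\sum(x^{i})^{2}/(d+f)^{2}\leq b$ carries over from Lemma~\ref{lemma_4.2}). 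Because $F$ vanishes on $\partial\Delta$, its supremum is attained at an interior point $p^{*}\in\Delta$, where $F_{i}(p^{*})=0$ and $\sum u^{ij}F_{ij}(p^{*})\leq 0$.

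I would then expand these two conditions just as in Lemma~\ref{lemma_4.2}, using the identities
$$\bigl(\log\textstyle\prod_{k}l_{k}^{\alpha}\bigr)_{j} \;=\; \alpha\sum_{k}\frac{\nu_{k}^{j}}{l_{k}},\qquad \bigl(\log\textstyle\prod_{k}l_{k}^{\alpha}\bigr)_{ij} \;=\; -\alpha\sum_{k}\frac{\nu_{k}^{i}\nu_{k}^{j}}{l_{k}^{2}},$$
and substituting the generalized Abreu equation in the form \eqref{eqn 2.8}. Choosing $\epsilon$ and $m$ as in Lemma~\ref{lemma_4.2} and $\alpha$ so that $\prod_{k}l_{k}^{\alpha}$ correctly captures $d_{E}^{2n}$ on $\bar\Delta$ (for $n=2$ Delzant polytopes, $\alpha=2$ is the natural choice, since $\prod_{k}l_{k}\gtrsim d_{E}^{2}$ near vertices and $\gtrsim d_{E}$ along edges), the resulting algebraic inequality closes and yields $F(p^{*})\leq \mff C$, hence $F\leq\mff C$ throughout $\Delta$; rearranging then gives the claimed bound.

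The hard part is controlling the singular contributions $-\alpha\sum_{k}u^{ij}\nu_{k}^{i}\nu_{k}^{j}/l_{k}^{2}$ and $\alpha^{2}\sum_{k,k'}u^{ij}\nu_{k}^{i}\nu_{k'}^{j}/(l_{k}l_{k'})$ that appear at $p^{*}$ when $p^{*}$ is close to $\partial\Delta$. Here the Delzant condition is decisive: at each vertex the inward normals $\nu_{k}$ form a $\mathbb{Z}$-basis, giving the local expansion $v^{ij}\nu_{k}^{i}\nu_{k}^{j}\sim l_{k}$ near the face $l_{k}=0$, and the upper bound on $H$ from Theorem~\ref{theorem 3.3} transfers this to $u^{ij}\nu_{k}^{i}\nu_{k}^{j}=\mathrm{O}(l_{k})$. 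With these estimates in hand, the $1/l_{k}^{2}$ singularities can be absorbed into the $\epsilon\sum f_{ii}$ term via AM--GM, precisely mirroring the mechanism used in the proof of Lemma~\ref{lemma_4.2}.
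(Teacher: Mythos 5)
Your proposal replaces the paper's auxiliary function with a genuinely different one, and the replacement introduces a gap. The paper works locally in a disk $D_r(q)\subset\bar\Delta$ and multiplies by the smooth cutoff $(r^2-\gamma)^2$ with $\gamma=\sum(\xi_i-\xi_i(q))^2$; boundary vanishing of $F$ on $\partial D_r(q)$ is supplied by Donaldson's Proposition~2 (a purely qualitative fact, requiring no uniform control on $u$), and since $\gamma_{ij}=2\delta_{ij}$ is \emph{constant}, the cutoff's Hessian contribution is just $-2\sum u^{ij}\gamma_{ij}/(r^2-\gamma)=-4\operatorname{tr}(u^{-1})/(r^2-\gamma)$, a term of the same type as those already present and handled by the computation in \cite{CLS5}. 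You instead work globally on $\Delta$ with the algebraic weight $\bigl(\prod_k l_k\bigr)^\alpha$, whose Hessian contribution $-\alpha\sum_k u^{ij}\nu_k^i\nu_k^j/l_k^2$ is a qualitatively new singular term not present in the paper's version.

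Controlling that term is where your argument breaks down, and you correctly identify it as the hard part but do not actually close it. The estimate $u^{ij}\nu_k^i\nu_k^j=O(l_k)$ is true for each fixed $u\in\mathcal C^{\infty}(\Delta,v)$ by the Guillemin boundary behavior, but the implicit constant depends on $\|\psi\|_{C^2(\bar\Delta)}$ where $u=v+\psi$, and also on a lower bound for $\det(u_{ij})$ near $\ell_k$ and an upper bound on the tangential second derivative $u_{\tau\tau}$ --- exactly the sort of a priori information this determinant estimate is supposed to produce, so the argument is circular. Appealing to the upper bound on $H$ does not repair it: that bound is proved only under the $(\mathbb{D},A,\lambda)$-stability hypothesis, which this lemma does not assume, and in any case an upper bound on $\det(u_{ij})/\det(v_{ij})$ gives neither of the two missing ingredients just named. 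The whole point of the paper's choice of a constant-Hessian cutoff is to avoid needing any quantitative boundary asymptotics of $u$; the only boundary input is the qualitative vanishing of $F$, which forces an interior maximum, after which the maximum-principle computation closes with constants depending only on $\mathrm{diam}(\Delta)$, $\mathbb D$ and $\|A\|_{L^\infty}$.
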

 \v\n
{\bf Proof.}   We can find constants $d\geq 1$, $b>0$ such that
$$\frac{\sum (x^{k})^2}{(d+f)^{2}}\leq b.$$
Consider the following function
$$
F = (r^2-\gamma)^2\exp \left\{ L\right\}\frac{1}{\mathbb{F}(d+f)^{4}},$$ in
$D_r(q),$ where $\overline{D_{r}(q)}\subset \bar\Delta,$ and
$$
\gamma=\sum (\xi_i-\xi_i(q))^2,\;\;\;\;\; L = \epsilon \frac{\sum
(x^{k})^2}{(d+f)^{2}},
$$  and $\epsilon$ is a positive constants to be determined later. Since $d_{E}(p,\p D_{r}(q))\leq d_{E}(p,\p \Delta),$ by Proposition 2 in \cite{D2}, we have
$$\lim_{p\to \p D_{r}(q)}\det(u_{ij})d^2_{E}(p,\p D_{r}(q))=0.$$
Then,  $F$
attains its supremum at some interior point $p^*$ of $ D_r(q)$. At
$p^*$, we have,
\begin{equation}\label{eqn_2.4.a}
 L_i - \frac{4f_i}{d+f}-\frac{2\gamma_i}{ r^2-\gamma }- (log \mathbb{F})_i = 0,
\end{equation}
and
\begin{eqnarray}\label{eqn_2.5.a}
&&   \sum u^{ij}{L_{ij}} - 4 \frac{\sum u^{ij}f_{ij}}{d+f}
 + 4\frac{\sum u^{ij}f_if_j}{(d+f)^2}
\\&&-\frac{2\sum u^{ij}\gamma_{ij}}{ r^2-\gamma }-\frac{ 2\sum
u^{ij}\gamma_{i}\gamma_{j}}{\left(r^2-\gamma\right)^2}  + \frac{\sum u^{ij}\mathbb{F}_i\mathbb{F}_j}{\mathbb{F}^2} + A\leq 0.
\nonumber
\end{eqnarray}
We choose  $\epsilon =
\frac{1}{2000 b}$. By the same calculation in \cite{CLS5} we prove the lemma.
 $\Box$

\v

\section{Interior estimate of $\Theta$}\label{sect_3.2}

Let $\Delta\subset \mathbb R^2$ be the Delant polytope.
The purpose of this section is to estimate  $\Theta$ near the boundary $\partial \Delta.$
We use the blow-up analysis to prove the following result.
\begin{theorem}\label{theorem_5.1} Let $u\in \mc C^\infty(\Delta,v)$. Choose a coordinate system $(\xi_1,\xi_2)$ such that $\ell=\{\xi|\xi_1=0\}.$ Denote
 $B_b(p, \Delta)=\{ q\in  {\Delta}|\; d_{u}(q,p)< b\}, $  where $d_u(p,q)$ is the distance from $p$ to $q$
with respect to the Calabi metric $G_u$.
Let $p\in \ell^\circ$  such that $B_b(p, \Delta)$ intersection
with $\partial\Delta$ lies in the
interior of $\ell$. Suppose that
\begin{equation}\label{eqn_7.1}
\| \mathcal{S}_{\mathbb{D}}(u)\|_{C^3(B_b(p, \Delta))}\leq \mff N_{2} ,  \quad
h_{22}|_{\ell}\geq \mff N_{2} \inv,
\end{equation}
for some constant $\mff N_{2}>0,$ where $h=u|_{\ell}$ and $\|.\|_{C^3(\Delta)}$
denotes the Euclidean  $C^{3}$-norm.
Then, for any $p\in B_{b/2}(p, \Delta) $,
\begin{equation}
\left(\Theta+\mathcal K \right)(p)
d^2_u(p,\ell)\leq \mff C_5,
\end{equation}
where    $\mff C_5$ is a positive constant
depending only on   $\mff N_2$.
\end{theorem}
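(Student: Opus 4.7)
The plan is a blow-up argument in the spirit of Chen--Li--Sheng (CLS5). Assume the estimate fails: there exist a sequence $u_k\in \mc C^\infty(\Delta,v)$ of solutions of \eqref{eqn 2.7} satisfying the hypotheses \eqref{eqn_7.1} with the same constant $\mff N_2$, and points $p_k\in B_{b/2}(p,\Delta)$ with
\[
\Lambda_k := (\Theta_k+\mc K_k)(p_k)\,d_{u_k}^2(p_k,\ell) \longrightarrow\infty.
\]
By the standard max-point trick, replace $p_k$ by a maximum point $q_k$ of $F_k(q):=(\Theta_k+\mc K_k)(q)\,\sigma_k^2(q)$, where $\sigma_k(q)$ denotes the Calabi distance from $q$ to $\partial B_b(p,\Delta)\setminus \ell^\circ$. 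The hypothesis $B_b(p,\Delta)\cap\partial\Delta\subset \ell^\circ$ ensures $F_k$ vanishes on the geodesic boundary of the admissible region, so $q_k$ lies in the interior and $F_k(q_k)\ge \Lambda_k\to\infty$.

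Next, rescale affinely. Apply an affine transformation of $\mathfrak t^*$ so that $q_k=0$, $\nabla u_k(0)=0$, and $(u_{k,ij}(0))=I$. Set $\varepsilon_k:=(\Theta_k+\mc K_k)(q_k)^{-1/2}\to 0$, and define
\[
\tilde u_k(\eta) := \varepsilon_k^{-2}\bigl(u_k(\varepsilon_k\eta)-u_k(0)\bigr).
\]
Then $(\tilde u_{k,ij}(0))=I$, and $\tilde u_k$ satisfies a generalized Abreu equation with $\tilde{\mathbb D}_k(\eta)=\mathbb D(\varepsilon_k\eta)\to \mathbb D(0)$ constant and $\tilde A_k(\eta)=\varepsilon_k^2 A(\varepsilon_k\eta)\to 0$. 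By construction $\tilde\Theta_k(0)+\tilde{\mc K}_k(0)=1$, and the maximality of $q_k$ forces $\tilde\Theta_k+\tilde{\mc K}_k\le 4$ on Calabi balls of radii $R_k\to\infty$ centred at $0$.

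Pass to a limit and invoke rigidity. The determinant bounds of Lemmas \ref{lemma_4.1}, \ref{lemma_4.2}, \ref{lemma_2.3.a}, together with the $H$-bound of Theorem \ref{theorem 3.3} and standard elliptic regularity for the linearized form \eqref{eqn 2.8}, yield $C^{4,\alpha}_{\rm loc}$ subsequential convergence $\tilde u_k\to u_\infty$. Two scenarios arise. (i) If $\varepsilon_k\inv d_{u_k}(q_k,\ell)\to \infty$, the limit is defined on $\mathbb R^2$ and solves the homogeneous equation $\sum u_\infty^{ij}{}_{ij}=0$; the Calabi--Jörgens--Pogorelov-type rigidity used in CLS5 forces $u_\infty$ to be a quadratic polynomial, whence $\Theta(u_\infty)(0)=0$, contradicting $\tilde\Theta_k(0)+\tilde{\mc K}_k(0)=1$. (ii) If $\varepsilon_k\inv d_{u_k}(q_k,\ell)\to D<\infty$, the limit lives on a half-plane bounded by a straight edge at Calabi distance $D$ from $0$; the tangential lower bound $h_{k,22}|_\ell\ge \mff N_2\inv$ is preserved by the affine normalization, so $u_\infty$ inherits Guillemin boundary behavior along the limit edge, and the half-plane classification in CLS5 again forces $u_\infty$ quadratic, giving the same contradiction.

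The main obstacle is the limit analysis, particularly case (ii). One must verify that the determinant estimate of Lemma \ref{lemma_2.3.a} survives the affine rescaling so that $\tilde u_k$ converges up to the limit edge with nondegenerate Hessian; one must use $h_{22}|_\ell\ge \mff N_2\inv$ to rule out collapse in the tangential direction under the normalization (which would otherwise cause the rescaled edge to degenerate to a point); and one must exploit $B_b(p,\Delta)\cap\partial\Delta\subset\ell^\circ$ to prevent $q_k$ from migrating toward a vertex, where the Guillemin model breaks down. Once these points are secured, the contradiction closes and $\mff C_5$ depends only on $\mff N_2$, as claimed.
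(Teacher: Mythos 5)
Your overall strategy (contradiction by blow-up, rescale so the Hessian at the blow-up point is the identity, push the equation to $\mathcal S=0$, and invoke a Bernstein-type rigidity) matches the paper's. However, there is a genuine gap in your case (ii), and the choice of penalization function is where your approach diverges in a way that matters.

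\emph{The penalization.} The paper sets $B^{(k)}$ to be the Calabi ball of radius $\tfrac12 d_{u_k}(p_k,\ell)$ centred at $p_k$, and maximizes $F_k(q)=\Theta_{u_k}(q)\,d^2_{u_k}(q,\partial B^{(k)})$ over $B^{(k)}$. Because $B^{(k)}$ is, by construction, at Calabi distance $\geq\tfrac12 d_{u_k}(p_k,\ell)$ from $\ell$, the maximum point $p_k^*$ stays away from the edge \emph{relative to the local scale} $d_k=\tfrac12 d_{u_k}(p_k^*,\partial B^{(k)})$. After the affine normalization with $\lambda_k=\Theta_{u_k}(p^*_k)$ one gets $d^\star_k\to\infty$, so the blow-up domain fills a Calabi-complete region and the limit is entire; your case (ii) simply never occurs. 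You instead take $\sigma_k(q)$ to be the distance to $\partial B_b(p,\Delta)\setminus\ell^\circ$, which does not vanish as $q\to\ell$, so the max point may migrate to $\ell$ and a half-plane limit can genuinely arise.

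\emph{Case (ii) does not close.} There is no rigidity for the limit you would get in case (ii). Consider $u_\infty=\xi_1\log\xi_1-\xi_1+\tfrac12\xi_2^2$ on the half-plane $\{\xi_1>0\}$. It has Guillemin-type boundary behavior with $h_{22}|_{\xi_1=0}\equiv1$, solves $\sum(u_\infty^{ij})_{ij}=0$, and is not quadratic. Moreover $\Phi=\tfrac{1}{16\xi_1}$ while $d^2_{u_\infty}((\xi_1,\xi_2),\ell)=4\xi_1$, so $\Phi\,d^2_{u_\infty}(\cdot,\ell)\equiv\tfrac14$ is bounded but $\Theta(0)$ (at any fixed interior point) is strictly positive. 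A blow-up centred at such a point, normalized by $(\Theta+\mathcal K)(0)=1$ with the edge at bounded rescaled distance, is perfectly consistent and gives no contradiction. The tangential bound $h_{22}\geq \mff N_2^{-1}$ and ``Guillemin boundary behavior along the limit edge'' do not rule this out; they are satisfied by the example. There is no ``half-plane classification in CLS5'' that forces the limit to be quadratic.

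\emph{Two smaller points.} First, writing $\tilde{\mathbb D}_k(\eta)=\mathbb D(\varepsilon_k\eta)\to\mathbb D(0)$ is not automatic once you compose with the Hessian-normalizing affine map $B_k$: the composite inverse is $B_k^{-1}\varepsilon_k$, and $B_k^{-1}$ can blow up if the Hessian degenerates. The paper handles this by first proving the key estimate $\lim_k \mathrm{diam}\,S_{u_k}(p_k^*,l_k)=0$ (using $h_{22}\geq\mff N_2^{-1}$ and Lemma~\ref{lemma_2.3.a}), which yields $A_k^{-1}\to0$ entrywise and hence $\tilde{\mathbb D}_k\to\mathrm{const}$ and $\|\nabla\log\tilde{\mathbb D}_k\|\to0$; you flag this but do not resolve it. Second, the passage to a $C^5$ limit requires the convergence Theorem~\ref{theorem_6.1} plus the determinant bounds of Lemmas~\ref{lemma_4.1}--\ref{lemma_2.3.a}, which you invoke only loosely. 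The right fix is to adopt the paper's penalization so that only your case (i) occurs; then the shrinking-sections argument does the rest.
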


To prove this theorem we need the following theorems, the proofs can be found in \cite{LJ} and \cite{CLS1}.

 \begin{theorem}\label{theorem_6.4}[Li-Jia] Let $u(\xi_1,\xi_2)$ be a $C^\infty$
 strictly convex function defined in a convex
domain $ \Omega \subset \real^2$. If
 $$
\mc S(u)=0,\;\;\;
u|_{\partial\Omega}=+\infty,$$
then the graph of $u$ must be an elliptic paraboloid.
\end{theorem}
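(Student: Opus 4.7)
\textbf{Proof plan for Theorem \ref{theorem_6.4}.}
The plan is to reduce the statement to a Bernstein-type rigidity result for the Calabi metric on $\mathbb{R}^2$. First I would pass to the Legendre dual $f = L(u)$. The boundary condition $u|_{\partial\Omega} = +\infty$ implies that the gradient map $\nabla u$ is surjective onto $\mathbb{R}^2$, so $f$ is a $C^\infty$ strictly convex function defined on all of $\mathbb{R}^2$. A standard argument (tracing a divergent $G_f$-geodesic and using the convexity of $f$ to rule out finite-length escape) then shows that the Calabi metric $G_f = G_u$ is complete on $\mathbb{R}^2$. The PDE $\mathcal{S}(u)=0$ translates, by \eqref{eqn 2.4} and the discussion preceding it, into the statement that $\log\det(f_{ij})$ is harmonic with respect to the Laplace--Beltrami operator $\Delta_u$ of $G_u$; equivalently, after the Legendre switch, $-f^{ij}(\log\det(f_{kl}))_{ij}=0$. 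Thus the problem becomes: \emph{if $(\mathbb{R}^2,G_f)$ is a complete Calabi metric on $\mathbb{R}^2$ with $\log\det(f_{ij})$ harmonic, then $f$ is a quadratic polynomial.}

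Next I would exploit the affine invariants $\Phi$ and $J$ from \eqref{eqn_2.1}--\eqref{eqn_2.3}. The idea is to derive a differential inequality of the form
\begin{equation*}
\Delta_u \Theta \;\geq\; c_0\, \Theta^{2} \;-\; c_1 \bigl(|\mathcal S|^{2} + |\nabla \mathcal S|^{2}_{G_u}\bigr),
\end{equation*}
by the Bochner-type computation that is standard in affine differential geometry on strictly convex graphs in dimension two. Because $\mathcal S\equiv 0$ identically, every error term on the right drops out and one is left with the clean inequality
\begin{equation*}
\Delta_u \Theta \;\geq\; c_0\, \Theta^{2}, \qquad \Theta \geq 0,
\end{equation*}
on the complete surface $(\mathbb{R}^2, G_u)$. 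Obtaining this clean inequality, and in particular controlling $\Phi$ by the harmonicity of $\log\det(f_{ij})$ so that one can add $\Phi$ onto $J$ without generating uncontrollable signs, is the main technical step.

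With this in hand I would apply the Cheng--Yau/Omori--Yau generalized maximum principle (valid because $G_u$ is complete and, in fact, has Ricci curvature controlled by the harmonic function $\log\det(f_{ij})$). Given the nonnegative subsolution $\Theta$ of $\Delta_u \Theta \geq c_0 \Theta^2$ on a complete surface, a cut-off function argument (test against $\eta^{2}\Theta$ with $\eta$ the standard Lipschitz cut-off on $G_u$-geodesic balls of radius $R$, then send $R\to\infty$) forces $\Theta\equiv 0$. The possible obstruction here is that one needs $G_u$-volume growth estimates to carry out the integration by parts; these follow from the completeness together with the pointwise bound on Ricci given by $\mathcal S=0$.

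Finally, $\Theta\equiv 0$ gives in particular $J\equiv 0$. Since $J$ is, up to a dimensional constant, the squared $G_f$-norm of the symmetric cubic tensor $(f_{ijk})$, this forces $f_{ijk}=0$ everywhere, so $f$ is a polynomial of degree at most two. Strict convexity of $f$ then makes it a positive-definite quadratic (plus an affine part), whose Legendre transform $u$ is again quadratic, and therefore the graph of $u$ is an elliptic paraboloid. The principal difficulty is not the conclusion but the derivation of the scalar differential inequality for $\Theta$ under the assumption $\mathcal S\equiv 0$ and the adaptation of the maximum principle to the Calabi (not Riemannian-in-the-standard-sense) geometry; everything else is algebraic once that inequality is in place.
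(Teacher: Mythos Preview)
The paper does not give its own proof of this theorem: immediately before the statement it says ``the proofs can be found in \cite{LJ} and \cite{CLS1}'', and Theorem~\ref{theorem_6.4} is quoted as a black-box result of Li--Jia and then applied in the blow-up argument for Theorem~\ref{theorem_5.1}. So there is no in-paper proof to compare your proposal against.

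That said, your outline is essentially the strategy of the Li--Jia paper \cite{LJ}: pass to the Legendre side, establish completeness of the Calabi metric, derive a Simons/Bochner-type inequality $\Delta\Theta \ge c_0\Theta^2$ (the lower-order terms vanish because $\mathcal S\equiv 0$), and run a cutoff/maximum-principle argument to force $\Theta\equiv 0$, hence $f_{ijk}\equiv 0$. One point to flag: your sentence ``a standard argument \ldots\ shows that the Calabi metric $G_f=G_u$ is complete'' is where the actual work hides. Completeness does not follow formally from $u|_{\partial\Omega}=+\infty$ alone; in \cite{LJ} it is obtained together with the interior $\Theta$-estimate (compare Theorem~\ref{theorem_6.7} here, which only trades Euclidean completeness for Calabi completeness \emph{under} a bound on $\Theta$). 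So in a full write-up you would need either to prove completeness directly from the boundary blow-up in dimension two, or to organize the argument as Li--Jia do, deriving a scale-invariant interior bound $\Theta(p)\,d_u^2(p,\partial\Omega)\le C$ first and letting the boundary recede. Apart from that ordering issue, your plan matches the cited source.
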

\begin{remark}\label{remark_6.5}   If we only assume that $u\in C^{5}$, Theorem \ref{theorem_6.4} remains  true.
\end{remark}
\begin{theorem}\label{theorem_5.6}
Suppose that $u\indexm$ is a sequence of smooth
strictly convex functions on $\Omega^* \subset \mathbb{R}^n$ containing 0
and $\Theta_{u\indexm}\leq\mff  N_{3}^2,$
and that $u\indexm$ are already normalized such that
$$
u\indexm\geq u\indexm(0)=0,\;\;\;
\frac{\partial^2u\indexm}{\p \xi_{i}\p \xi_{j}}(0)=\delta_{ij}.$$ Then
\begin{enumerate}
\item[(a)] there exists a constant  $\mff a>0$ such that $B_{\mff a,u_k}(0)\subset \Omega $,
\item[(b)] there exists a subsequence of $u\indexm$ that locally
$C^2$-converges to a strictly convex function $u_\infty$
in $B_{\mff a,u_\infty}(0)$,
\item[(c)] moreover,  if $(\Omega^* ,G_{u\indexm})$ is complete,
$(\Omega^* ,G_{u_\infty})$ is complete.
\end{enumerate}
\end{theorem}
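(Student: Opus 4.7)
The two invariants packaged in $\Theta$ give complementary analytic information: the bound $\Phi\le \mathsf{N}_3^2$ is a Calabi--Lipschitz bound on $\log\det(u_{k,ij})$ in view of \eqref{eqn_2.1}, while $J\le \mathsf{N}_3^2$ is a pointwise bound on the $G_{u_k}$-norm of the third derivatives of $u_k$ by \eqref{eqn_2.2}. The overall strategy is to convert these intrinsic (affine-invariant) bounds into uniform two-sided Euclidean control of the Hessian $(u_{k,ij})$ on a small Calabi ball around $0$, then invoke Arzel\`a--Ascoli for part (b), and finally derive (c) from the local $C^2$-convergence by a geodesic escape argument.

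\emph{Step 1 (uniform Hessian bounds).} From $\Phi\le \mathsf{N}_3^2$ and the identity $\Phi=(n+2)^{-2}\|\nabla\log\det(u_{ij})\|^2_{G_u}$, the function $\log\det(u_{k,ij})$ is Lipschitz with respect to $d_{u_k}$ with constant depending only on $\mathsf{N}_3$. Together with the normalization $\det(u_{k,ij})(0)=1$, this yields $e^{-C_1 t}\le \det(u_{k,ij})\le e^{C_1 t}$ on $B_{t,u_k}(0)$. Similarly, the bound on $J$ controls the third covariant derivatives of $u_k$ in $G_{u_k}$; integrating the resulting ODE for the Hessian along $G_{u_k}$-geodesics from $0$ with initial condition $(u_{k,ij})(0)=\delta_{ij}$ gives a two-sided bound
\[
\lambda\, I \;\le\; (u_{k,ij}) \;\le\; \lambda^{-1}\, I
\]
on $B_{\mathsf{a},u_k}(0)$ for some $\mathsf{a},\lambda>0$ depending only on $\mathsf{N}_3$. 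With the Hessian uniformly elliptic on $B_{\mathsf{a},u_k}(0)$, the $G_{u_k}$-distance is comparable to the Euclidean distance; after possibly shrinking $\mathsf{a}$ to keep the corresponding Euclidean ball inside $\Omega^*$, one obtains (a).

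\emph{Step 2 (parts (b) and (c)).} The uniform bound on $(u_{k,ij})$ together with the Calabi-Lipschitz bound on $\log\det(u_{k,ij})$ upgrades, by standard elliptic/affine-geometry reasoning, to a uniform $C^{2,\alpha}$ bound on $u_k$ on any compact subset of the above Euclidean ball. Arzel\`a--Ascoli then supplies a subsequence converging in $C^2_{\mathrm{loc}}$ to $u_\infty$, whose Hessian at $0$ equals $\delta_{ij}$, so $u_\infty$ is strictly convex and $B_{\mathsf{a},u_\infty}(0)$ is well-defined. For (c), suppose every $(\Omega^*,G_{u_k})$ is complete but $(\Omega^*,G_{u_\infty})$ is not: there exists a $G_{u_\infty}$-Cauchy sequence $p_j$ in $\Omega^*$ with no limit, joined by nearly minimizing $G_{u_\infty}$-geodesics into a path of finite $G_{u_\infty}$-length exiting every compact subset. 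Local $C^2$-convergence makes $G_{u_k}$ uniformly comparable to $G_{u_\infty}$ on any compact $K\subset\Omega^*$ for large $k$, so the same path has uniformly bounded $G_{u_k}$-length; but completeness of $G_{u_k}$ then confines its endpoint to a compact subset of $\Omega^*$, contradicting the escape.

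\emph{Main obstacle.} The delicate step is Step~1: converting the intrinsic (affine-invariant, integral-type) bounds on $\Phi$ and $J$ into uniform pointwise Euclidean bounds on $(u_{k,ij})$, starting from the normalization $(u_{k,ij})(0)=\delta_{ij}$. The $J$-bound must be carefully unpacked as an ODE for the Hessian along $G_{u_k}$-geodesics so that the initial condition propagates to a definite Calabi radius independent of $k$. Once this uniform ellipticity is in hand, the compactness argument in Step~2 and the completeness transfer are comparatively routine.
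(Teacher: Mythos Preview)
The paper does not prove this theorem at all: immediately before the statement it writes ``the proofs can be found in \cite{LJ} and \cite{CLS1}'' and moves on. So there is no in-paper argument to compare against; what you have written is essentially a reconstruction of the argument in those references, and your Step~1 (integrate the $J$- and $\Phi$-bounds along Calabi geodesics from the normalized point to get two-sided Hessian control on a definite Calabi ball) is indeed the standard mechanism used there. Parts (a) and (b) are handled correctly in outline.

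There is, however, a genuine gap in your argument for (c). You assume a $G_{u_\infty}$-finite-length path that exits every compact subset of $\Omega^*$, and then assert that local $C^2$-convergence gives uniform comparability of $G_{u_k}$ and $G_{u_\infty}$ ``on any compact $K$'', hence a uniform $G_{u_k}$-length bound for that path. But the path is, by hypothesis, not contained in any compact $K$, so compact-set comparability does not bound its $G_{u_k}$-length; the comparison constants could blow up along an exhaustion. The argument in the cited references proceeds differently: one iterates Step~1, using the \emph{uniform} bound $\Theta_{u_k}\le \mathsf{N}_3^2$ (not merely the limit bound), to obtain Hessian bounds on $B_{R,u_k}(0)$ for every fixed $R$, with constants depending only on $R$ and $\mathsf{N}_3$. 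Completeness of $G_{u_k}$ ensures these balls exist for all $R$ and are contained in Euclidean balls of controlled size; passing to the limit then gives the same two-sided Hessian control for $u_\infty$ on $B_{R,u_\infty}(0)$ for all $R$, which directly yields completeness of $G_{u_\infty}$. Your escape/contradiction framing can be salvaged, but only after inserting this uniform-in-$k$ iteration of Step~1; as written, the appeal to local $C^2$-convergence on compacta is insufficient.
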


\begin{theorem}\label{theorem_6.7}
Let $\Omega^*\subset \mathbb{R}^n$ be a convex domain. Let $u$ be a smooth strictly convex function on
$\Omega^*$ with  $\Theta\leq\mff N_{3}^2$. Then $
(\Omega^*,G_u)$ is
 complete if and only if  the graph of $u$ is Euclidean complete
in $\mathbb{R}^{n+1}$.
\end{theorem}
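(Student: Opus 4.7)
The plan is to deduce the equivalence from a uniform local comparison of $G_u$ with the Euclidean metric on the graph, relying on the bound $\Theta\leq \mff N_3^2$ together with Theorem \ref{theorem_5.6}.

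First I would establish a uniform normalization ball around each point: given $p_0\in\Omega^*$, apply an affine change of coordinates in $\xi$ and subtract an affine function from $u$ so that in the new coordinates $u(p_0)=0$, $\nabla u(p_0)=0$, and $u_{ij}(p_0)=\delta_{ij}$. Such transformations preserve $\Theta$, preserve $G_u$, and preserve Euclidean completeness of the graph up to an affine automorphism of $\mathbb R^{n+1}$. Applying Theorem \ref{theorem_5.6} then yields a constant $\mff a=\mff a(\mff N_3)>0$ and constants $c_1,c_2>0$ depending only on $\mff N_3$ such that the $G_u$-geodesic ball $B_{\mff a}(p_0)\subset\Omega^*$ and
$$c_1\,\delta_{ij}\le u_{ij}(\xi)\le c_2\,\delta_{ij},\qquad |\nabla u(\xi)|\le c_2\qquad\text{on }B_{\mff a}(p_0).$$

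Next, on $B_{\mff a}(p_0)$ the three metrics — the Calabi metric $G_u=u_{ij}d\xi^i d\xi^j$, the flat base metric $\delta_{ij}d\xi^i d\xi^j$, and the Euclidean induced metric $(\delta_{ij}+u_i u_j)d\xi^i d\xi^j$ on the graph — are mutually equivalent with constants depending only on $\mff N_3$. In particular, for curves lying in $B_{\mff a/2}(p_0)$, the $G_u$-length of a curve is comparable to the Euclidean length of its image on the graph. For the forward direction ($(\Omega^*,G_u)$ complete $\Rightarrow$ graph Euclidean complete), let $\{p_k=(\xi_k,u(\xi_k))\}$ be Euclidean Cauchy on the graph; the local comparison converts sufficiently short Euclidean chords between $p_k$ and $p_l$ into short $G_u$-paths between $\xi_k$ and $\xi_l$, so $\{\xi_k\}$ is Cauchy in $G_u$, converges to some $\xi^*\in\Omega^*$ by completeness, and $p_k\to(\xi^*,u(\xi^*))$ on the graph. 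For the reverse direction, let $\gamma\colon[0,T)\to\Omega^*$ be a unit-speed $G_u$-geodesic with $T<\infty$ which cannot be extended; chopping $[0,T)$ into sub-intervals of $G_u$-length less than $\mff a/4$ and applying the local comparison on each, the Euclidean length of $\tilde\gamma(t)=(\gamma(t),u(\gamma(t)))$ is bounded by $CT<\infty$, so $\tilde\gamma(t)$ is Euclidean Cauchy; Euclidean completeness of the graph supplies a limit $(\xi^*,u(\xi^*))$ with $\xi^*\in\Omega^*$, and one more application of the local comparison then extends $\gamma$ past $T$, contradicting the choice of $\gamma$.

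The principal obstacle is the first step: extracting a $G_u$-normalization ball of radius bounded below \emph{independently of $p_0$}, together with uniform $C^2$ bounds on $u$ inside it. This is exactly where the hypothesis $\Theta\leq \mff N_3^2$ enters decisively, and it is the content of Theorem \ref{theorem_5.6}(a)--(b): normalization brings $u$ into the class covered by that theorem, and the subsequence-and-convergence argument furnishes the uniform ball and the $C^2$ estimates. Once this is in hand, the $C^2$-equivalence of the three metrics is routine, and the usual completeness/Cauchy-sequence argument closes both directions.
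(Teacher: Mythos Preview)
The paper does not actually prove Theorem~\ref{theorem_6.7}; it merely states the result and refers to \cite{LJ} and \cite{CLS1} for the proof. So there is no in-paper argument to compare against, and your proposal must be judged on its own.

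Your strategy---normalize at a point, invoke Theorem~\ref{theorem_5.6} to get a $G_u$-ball of uniform radius with uniform $C^2$ bounds, and then compare metrics---is the right circle of ideas. The gap is in how you pass from the \emph{normalized} picture back to the \emph{original} one. The bounds $c_1\delta_{ij}\le u_{ij}\le c_2\delta_{ij}$ and $|\nabla u|\le c_2$ that you extract on $B_{\mff a}(p_0)$ hold only in the coordinates obtained by the affine renormalization at $p_0$. The Euclidean graph metric $(\delta_{ij}+u_iu_j)d\xi^i d\xi^j$ is \emph{not} invariant under that renormalization: it changes by the operator norm of the affine map, which in turn depends on the eigenvalues of $u_{ij}(p_0)$ and on $|\nabla u(p_0)|$ in the original coordinates, quantities you have no uniform control over. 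Consequently your assertion that ``the Euclidean length of $\tilde\gamma$ is bounded by $CT$'' with a constant $C$ depending only on $\mff N_3$ is not justified; likewise, in the forward direction, a short Euclidean chord on the original graph can become arbitrarily long after renormalization at $\xi_k$, so you cannot directly conclude it lies in $B_{\mff a}(\xi_k)$.

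The defect is repairable, but not for free. For the reverse direction one can chain the normalizations: the renormalization at $q_{i+1}$ differs from that at $q_i$ by an affine map whose norm is controlled (by the $C^2$ bounds on $B_{\mff a}(q_i)$ in the $q_i$-normalized chart), so the distortion grows at worst geometrically in the number of pieces, which is $O(T/\mff a)$; the total Euclidean graph length is then finite, though with a bound depending on $T$ and on the data at $\gamma(0)$, which is all you need. The forward direction needs a separate argument (for instance, showing that along a sequence with bounded $u$-values approaching $\partial\Omega^*$ the $G_u$-distance to a fixed interior point stays bounded, contradicting completeness); your one-line sketch does not supply it. You should make these steps explicit rather than asserting a uniform local equivalence that does not hold in the original coordinates.
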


\v\n
{\bf Proof of Theorem \ref{theorem_5.1}.} If the theorem is not true, then there
exists a sequence of functions $u\indexm$ and a sequence of points
$p_k\in B_{b/2}(p_{k},\Delta)$ such that
\begin{equation}\label{eqn_6.5}
\Theta_{u\indexm}(p_k)d^2_{u\indexm}(p_k,
\ell)\to \infty.\end{equation}

Let $B\indexn$ be the $\frac{1}{2}d_{u\indexm}(p_k,
\ell)$-ball centered at $p_k$ and
consider the {\em affine transformationally  invariant function}
$$
F_k(p)=\Theta_{u\indexm}(p)d^2_{u\indexm}(p,
\partial{B}\indexn).
$$
$F_k$ attains its maximum at $p_k^*$. Put
$$d_k=\frac{1}{2}d_{u\indexm}(p^*_k,\partial{B}\indexn).$$
By adding linear functions we assume that
\[u_k(p_k^*)=0,\;\;\; \nabla u_k(p_k^*)=0.\;\;\; \]
  Let $l_{k}$ be the largest constant such that the section $S_{u_{k}}(p^{*}_{k},l_{k})$ is compact. 
Denote by $h_{k}$ the restriction of $u_{k}$ to $\ell$. Then, $h_{k}$ locally
uniformly converges to a convex function $h $ on $\ell$, and  $u_{k}$  locally   $C^6$-converges
in $\Delta$ to a strictly convex function $u_\infty$. $u_\infty$ can be
continuously  extended to be defined on $\bar \Delta.$  By Lemma 3.3 in \cite{LLS}, $C^{0}$-estimate and Lemma \ref{lemma_2.3.a} we have
  $$ - C_1- \mff    \log d_E(p,\partial \Delta)  \leq \log\det(D^2 u_k)(p)\leq C_1-   8  \log d_E(p,\partial \Delta) $$
 and 
\begin{equation}\label{eqn_5.4}
   \frac{\p^2h_{k}}{\p \xi_{2}^2}\geq C_{2} 
\end{equation}
  for some positive constants $C_{1},C_{2} $ independent of $k.$ As in \cite{CLS2} we can conclude that
  \begin{equation} \label{eqn_5.5a}
 u_\infty(q) = h(q),\;\;\;\;\;\forall  q\in \ell^\circ 
 \end{equation}
Using this and   the interior regularity we have
 $\lim\limits_{k\to\infty} l_{k} =0.$
  Then it follows from \eqref{eqn_5.4} and \eqref{eqn_5.5a}  that
\begin{equation}\label{eqn 5.4}
\;\;\;\lim_{k\to\infty}diam \left(S_{u_{k}}(p^{*}_{k},l_{k})\right)=0.
\end{equation}
 By taking a proper coordinate  translation  we may assume that the coordinate of $p^*_k$ is $0$. Then
\begin{itemize}
\item $\Theta_{u\indexm}(0)d^2_{k}\to \infty$. \item
$\Theta_{u\indexm}\leq 4 \Theta_{u\indexm}(0)$ in
$B\indexn_{d_k}(0)$.
\end{itemize}
By \eqref{eqn_6.5}  we have
\begin{equation}\label{eqnc_4.2}\lim\limits_{k\to\infty}
\Theta_{u\indexm}(0)=+\infty.\end{equation}
We take an affine transformation   on $u_k$:
$$\xi^{\star k}=A_k\xi,\;\;u^{\star}_k(\xi^{\star k}):= \lambda_k
u_k\left(A_k\inv \xi^{\star k}\right),$$ where $\lambda_k = \Theta_{u\indexm}(0).$ Choose $A_k$ such that  $\frac{\partial^2 u^{\star}_{k}}{\p \xi_{i}\p \xi_{j}}(0)=\delta_{ij}.$ Denote $A_k\inv=(b^k_{ij})$. Then by the affine transformation
rule we know that
$\Theta_{u^{\star}_k}(0) = 1,$
and for any fixed large $R$, when $k$ large enough
$$\Theta_{u^{\star}_k}\leq 4
\;\;\;\;in \;\;\;B\indexn_{R}(0).$$
Denote by $d_k^{\star}$ the change of $d\indexm$ after the affine transformation. Then $$\lim_{k\to\infty}d\indexm^{\star}=+\infty.$$
By Theorem \ref{theorem_5.6}, one concludes that $u^{\star}_k$ locally uniformly $C^{2}$-converges
to a function $u^\star_\infty,$ and the graph of $u_\infty^{\star}$ is complete with respect to the Calabi metric $G_{u^\star_\infty}.$ By Theorem \ref{theorem_6.7} the graph of $u_\infty^{\star}$ is Euclidean complete.
We have, in $B_{R,u^\star_{\infty}}(0),$
\begin{equation}\label{eqn 5.6}
C_{2}^{-1}\leq
\lambda_{\min}(u^\star\indexm)\leq \lambda_{\max}(u^\star\indexm)\leq C_{2},
\end{equation}
where $C_{2}>0$ is a constant depending only on $R.$ Here $\lambda_{\min}(u^\star\indexm)$ and $\lambda_{\max}(u^\star\indexm)$ denotes the minimal and maximal eigenvlues of the Hessian of  $u^\star_{k}$ in $B_{R,u^\star_{\infty}}(0).$
Then  there exists an Euclidean ball $D_\epsilon(0)$ such that  $D_\epsilon(0)\subset A_k(S_{u_{k}}(0,l_{k}))$ when $k$ large enough. Therefore
 $A_k\inv D_\epsilon(0)\subset S_{u_{k}}(0,l_{k}).$ It follows from \eqref{eqn 5.4} that for any $1 \leq i,j\leq 2$
\begin{equation}\label{eqna_4.1}\lim_{k\to \infty}\left|\frac{\partial \xi_i}{\partial \xi^{\star k}_j}\right|=\lim_{k\to \infty}|b^k_{ij}|= 0, \end{equation} where $ A_k\inv=(b^k_{ij}) .$
By a direct calculation we have
$$\left|\frac{\partial \mathcal{S}_{\mathbb{D}_{k}}(u^{\star}\indexm)}{\partial \xi_i^{\star k}}\right|=
 \lambda_k\inv   \left|\sum_{j}b^k_{ji}\frac{\partial \mathcal{S}_{\mathbb{D}_{k}}(u \indexm)}{\partial \xi_j}\right|  \leq 8diam(\Omega){\epsilon\inv}\lambda_k\inv K_o\to 0 $$ as $k$ goes to infinity. Similarly, we have
$$\lim_{k\to\infty}\|\mathcal{S}_{\mathbb{D}_{k}}(u^{\star}\indexm)\|_{C^2}= 0,\;\;\lim_{k\to\infty}\max |\nabla_{\xi^{\star k}} log \mathbb{D}_{k}|= 0.
$$
  By  \eqref{eqn 5.6} there is a constant $r_1>0$ independent of $k$ such that $S_{u^\star_{k}}(q,r_1)\subset B_{R,u^\star_{\infty}}(0)$ for any $q\in B_{R/2,u^\star_{\infty}}(0).$ It follows from Theorem \ref{theorem_6.1} below that
$u^{\star}\indexm$ locally  $C^{3,\alpha}$-converges to a function $u^\star_\infty.$
Then  by the standard elliptic equation technique we obtain that $ u^{\star}\indexm $ locally   $C^5$-converges to $ u^{\star}_\infty$ with
$$
  \mathbb{D}=constant,\;\;\;\; \Theta_{u^{\star}_\infty}(0)=1.
$$  Hence by Theorem \ref{theorem_6.4} and Remark \ref{remark_6.5},
$u^{\star}_\infty$ must
be quadratic and $\Theta\equiv 0$. We get a contradiction. $\blacksquare$
\v
By the same argument of \cite{CLS1} we have the following corollaries. 
\begin{corollary}\label{corollaryc_4.2.1} 
Let $u\in \mc C^\infty(\Delta,v)$. Suppose that  
\begin{equation}\label{eqn 5.8a}
\max_{\Delta} u-\min_{\Delta} u\leq \mff N_2,\;\;\;\;\;
\| \mathcal{S}_{\mathbb{D}}(u)\|_{C^3 (  \Delta) }\leq \mff N_{2}  
\end{equation}
for some constant $\mff N_{2}>0.$ 
Then, for any $p\in  \Delta$,
\begin{equation}
\left(\Theta+\mathcal K \right)(p)
d^2_u(p,\p\Delta)\leq \mff C_5,
\end{equation}
where    $\mff C_5$ is a positive constant
depending only on   $\mff N_2$.
\end{corollary}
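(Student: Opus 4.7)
I would follow the blow-up scheme used in the proof of Theorem~\ref{theorem_5.1}, exploiting Theorem~\ref{theorem_5.1} itself to handle a neighbourhood of each edge-interior. Suppose for contradiction that the conclusion fails; then there exist $u_k\in\mathcal{C}^\infty(\Delta,v)$ satisfying \eqref{eqn 5.8a} uniformly, and points $p_k\in\Delta$, such that
$$(\Theta_{u_k}+\mathcal{K}_{u_k})(p_k)\,d^2_{u_k}(p_k,\partial\Delta)\longrightarrow\infty.$$

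First I would show that $p_k$ cannot subconverge to the relative interior of an edge. Starting from the oscillation bound, the $C^3$ control on $\mathcal{S}_\mathbb{D}(u_k)$, and Lemma~\ref{lemma_2.3.a}, a compactness argument yields a uniform lower bound $h_{22}|_\ell\ge c(\mathfrak{N}_2)>0$ on every edge $\ell$, on a neighbourhood of $\ell^\circ$ bounded away from the vertices. Theorem~\ref{theorem_5.1} then applies on those neighbourhoods, giving
$$(\Theta_{u_k}+\mathcal{K}_{u_k})(p)\,d^2_{u_k}(p,\ell)\le \mathfrak{C}_5,$$
and the obvious inequality $d_{u_k}(p,\partial\Delta)\le d_{u_k}(p,\ell)$ yields the desired bound on these sets. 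Consequently $p_k$ must accumulate either at a vertex of $\Delta$ or strictly inside $\Delta$.

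For these remaining cases, I would redo the argument of Theorem~\ref{theorem_5.1} with only minor modification: choose $p_k^*$ maximising the affine invariant $(\Theta+\mathcal{K})(\cdot)\,d^2_{u_k}(\cdot,\partial B^{(k)})$ over an appropriate Calabi ball, and rescale by an affine transformation $\xi^{\star k}=A_k\xi$, $u_k^\star=\lambda_k u_k\circ A_k^{-1}$, with $\lambda_k=\Theta_{u_k}(p_k^*)\to\infty$ and $(u_k^\star)_{ij}(0)=\delta_{ij}$. Theorem~\ref{theorem_5.6} produces a strictly convex $C^2$-limit $u_\infty^\star$; the $\lambda_k^{-1}$ prefactor inherent in the transformation forces $\|\mathcal{S}_{\mathbb{D}_k}(u_k^\star)\|_{C^2}\to 0$ and $|\nabla_{\xi^{\star k}}\log\mathbb{D}_k|\to 0$, so the limit solves the classical Abreu equation $\mathcal{S}(u_\infty^\star)=0$. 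Elliptic bootstrap then upgrades the convergence to $C^5$. Theorem~\ref{theorem_6.7} promotes Calabi-completeness to Euclidean completeness of the graph, and Theorem~\ref{theorem_6.4} together with Remark~\ref{remark_6.5} forces $u_\infty^\star$ to be quadratic, so $\Theta_{u_\infty^\star}\equiv 0$, contradicting $\Theta_{u_\infty^\star}(0)=1$.

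The main obstacle is the uniform lower bound $h_{22}|_\ell\ge c(\mathfrak{N}_2)$ on edge-interior neighbourhoods, which is what makes Theorem~\ref{theorem_5.1} applicable. Its proof itself would require a secondary blow-up: if $h_{22}$ degenerates along a sequence $\xi_k\in\ell^\circ$ bounded away from the vertices, one rescales to produce a limit whose boundary trace fails strict convexity, contradicting \eqref{eqn_5.5a} together with the boundedness of $\mathcal{S}_\mathbb{D}(u_k)$. A secondary concern is that if $p_k^*$ accumulates at a vertex during the main blow-up, the rescaled domain $A_k(\Delta)$ can degenerate to a wedge; however, the normalisation $(u_k^\star)_{ij}(0)=\delta_{ij}$ with $\Theta_{u_k^\star}(0)=1$ ensures that any fixed-size Calabi ball lies well inside $A_k(\Delta)$ for $k$ large, so Theorem~\ref{theorem_5.6} still produces a genuine limit on a convex subdomain and the contradiction goes through.
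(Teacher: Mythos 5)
The paper itself provides no argument for this corollary beyond citing \cite{CLS1} (``By the same argument of \cite{CLS1} we have the following corollaries''), so a detailed comparison with the paper's own proof is not possible. Your reconstruction via a contradiction/blow-up argument modelled on Theorem~\ref{theorem_5.1} is the natural and almost certainly the intended route, and the blow-up core — rescaling by $\lambda_k=\Theta_{u_k}(p_k^*)$, using Theorem~\ref{theorem_5.6}, showing the rescaled $\mathcal{S}_{\mathbb{D}_k}$ and $\nabla\log\mathbb{D}_k$ vanish in the limit, then invoking Theorems~\ref{theorem_6.7} and~\ref{theorem_6.4} — is a faithful reproduction of what the proof of Theorem~\ref{theorem_5.1} does.

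There are, however, two places where your sketch is thinner than it should be. First, your reduction to Theorem~\ref{theorem_5.1} hinges on a uniform lower bound $h_{22}|_{\ell}\geq c(\mathfrak{N}_2)>0$ on each edge $\ell^\circ$ (bounded away from vertices), and the argument you offer for it — a secondary blow-up that ``contradicts~\eqref{eqn_5.5a}'' — is circular, since in the proof of Theorem~\ref{theorem_5.1} the identity~\eqref{eqn_5.5a} is itself established \emph{after} the bound~\eqref{eqn_5.4} on $h_{k,22}$ has been assumed or derived. To break the circle you need an independent argument, along the lines of \cite{CLS1}/\cite{CLS2}: combine the oscillation bound with the sharp determinant estimates of Lemmas~\ref{lemma_4.1}, \ref{lemma_4.2} and~\ref{lemma_2.3.a} (together with the lower determinant estimate from Lemma~3.3 of \cite{LLS}) to show that the boundary trace $h=u|_{\ell}$ is a solution of a uniformly elliptic one-dimensional problem with controlled data, hence has a uniform modulus of strict convexity. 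This is a substantive step, not a routine compactness argument, and without it Theorem~\ref{theorem_5.1} is not applicable.

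Second, the vertex case is handled too quickly. When $p_k$ accumulates at a vertex, the rescaled domains $A_k(\Delta)$ degenerate to a cone, and you assert that the normalisation $(u_k^\star)_{ij}(0)=\delta_{ij}$ keeps a fixed-size Calabi ball inside $A_k(\Delta)$; this is plausible but is exactly the content of the ``control of sections'' arguments in \S7.2 of \cite{CLS1} and \S5.3--5.4 of \cite{CLS2} which the paper flags as requiring extra work in the generalized setting. You should at least state that the vertex case is reduced to the corner analysis of those papers (or to Theorem~\ref{theorem_6.3} here) rather than dismissing it in a sentence. The interior-accumulation case, by contrast, follows directly from the interior regularity of \cite{LLS} and needs no blow-up at all.

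In short: the blow-up skeleton is right and matches the method the paper relies on by citation, but the derivation of the edge-convexity bound $h_{22}\geq c$ and the treatment of the vertex case are genuine gaps in the proposal as written.
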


\begin{corollary}Let $u$ be as that in Corollary \ref{corollaryc_4.2.1}, and $\mathbb A(u)$ be operator such that 
$\mathbb A(u)d^{2}(p,\p\Delta)$ is affine invariant. 
Suppose that  $$
\|\mathbb A(u)\|_{C^{3}(\Delta)}\leq \mff N_{2},\;\;\;\; 
|\mathbb A(u) |\geq \delta>0.$$
Then
there is a constant $\mff C_5>0$, depending only on $\Delta$ and $\mff N_2$, such that
\begin{equation}\label{eqn_4.4b}
\|\nabla\log|\mathbb A(u) |\|^2(p) d^2_u(p,\partial {\Delta})\leq \mff C_5,\;\;\; \forall p\in \Omega. \end{equation}
\end{corollary}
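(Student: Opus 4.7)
The strategy will be a blow-up contradiction that parallels the proof of Theorem \ref{theorem_5.1}, applied to the auxiliary quantity
$$F(u)(p):=\|\nabla\log|\mathbb{A}(u)|\|^2(p)\,d^2_u(p,\partial\Delta).$$
First I would verify that $F$ is affine invariant: under $\xi^*=A\xi$, $u^*(\xi^*)=\lambda u(\xi)$, the Calabi distance-squared satisfies $d^2_{u^*}=\lambda\,d^2_u$, and $\|\nabla h\|^2_{G_{u^*}}=\lambda^{-1}\|\nabla h\|^2_{G_u}$ for any scalar $h$. The hypothesis that $\mathbb{A}(u)d^2$ is affine invariant then forces $\mathbb{A}$ to scale by $\lambda^{-1}$, so $\log|\mathbb{A}(u^*)|$ differs from $\log|\mathbb{A}(u)|$ by an additive constant which the gradient annihilates, and thus $F$ is preserved.

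Assuming by contradiction that $F$ is unbounded on the class, I would pick sequences $u_k$ in the class and $p_k\in\Delta$ with $F(u_k)(p_k)\to\infty$. Using the maximum-principle trick of Theorem \ref{theorem_5.1}, I would replace $p_k$ by the maximizer $p^*_k$ of the localized $F_k(p):=\|\nabla\log|\mathbb{A}(u_k)|\|^2(p)\,d^2_{u_k}(p,\partial B_k)$ on a suitable Calabi half-ball $B_k$ around $p_k$, deducing $\|\nabla\log|\mathbb{A}(u_k)|\|^2(p^*_k)\to\infty$. After a translation so that $u_k(p^*_k)=0$ and $\nabla u_k(p^*_k)=0$, an affine rescaling $u^*_k(\xi^*)=\lambda_k u_k(A_k^{-1}\xi^*)$ would normalize the Hessian of $u^*_k$ at $0$ to the identity; Corollary \ref{corollaryc_4.2.1} then yields a uniform $\Theta$-bound for $u^*_k$ on Calabi balls of any fixed radius around $0$, while $d^*_k\to\infty$. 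Theorems \ref{theorem_5.6} and \ref{theorem_6.7} then extract a $C^2_{\mathrm{loc}}$-subsequential limit $u^*_\infty$ on $\mathbb{R}^n$ with Euclidean-complete graph; the rescaling kills all derivatives of $\mathcal{S}_{\mathbb{D}}$ and of $\mathbb{D}$, so $u^*_\infty$ solves $\mathcal{S}(u^*_\infty)\equiv 0$. The elliptic bootstrap at the end of the proof of Theorem \ref{theorem_5.1} promotes the convergence to $C^5_{\mathrm{loc}}$, and Theorem \ref{theorem_6.4} together with Remark \ref{remark_6.5} identifies $u^*_\infty$ as a quadratic polynomial.

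The main obstacle will be closing the contradiction on the quadratic limit. Since $u^*_\infty$ is quadratic on all of $\mathbb{R}^n$, the translation group of $\mathbb{R}^n$ acts as affine automorphisms of the pair, and the affine invariance of $\mathbb{A}(u)d^2$ forces the limit $\mathbb{A}(u^*_\infty)$ to be translation invariant, hence constant. The lower bound $|\mathbb{A}|\geq\delta$, carried through a suitable choice of the affine renormalization, should keep this constant nonzero, so that $\nabla\log|\mathbb{A}(u^*_\infty)|\equiv 0$. Comparing with the affine invariance of $F$ and the blow-up hypothesis $F_k(p^*_k)\to\infty$ (normalized via $\lambda_k$) then yields the desired contradiction. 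The most delicate point I expect is verifying that $|\mathbb{A}|\geq\delta$ is inherited in the limit jointly with the Hessian normalization; this uses the extra one-parameter freedom in the affine rescaling group that remains after fixing $(u^*_k)_{ij}(0)=\delta_{ij}$, and is the only place where the specific form of the hypothesis $|\mathbb{A}(u)|\geq\delta$ (as opposed to merely $C^3$-boundedness) plays a role.
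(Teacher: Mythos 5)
The paper offers no proof of this corollary (or the preceding one); it simply says ``By the same argument of \cite{CLS1}''. So there is no proof in the text to compare against, and your proposal has to be judged on its own. Your overall strategy — check affine invariance of $F(u)(p)=\|\nabla\log|\mathbb A(u)|\|^2_{G_u}(p)\,d^2_u(p,\partial\Delta)$ and run a blow-up contradiction parallel to Theorem~\ref{theorem_5.1}, using Corollary~\ref{corollaryc_4.2.1} to control $\Theta$ along the rescaled sequence — is the right idea, and your verification that $F$ is affine invariant (Calabi length squared scales by $\lambda$, $\|\nabla h\|^2_{G_u}$ by $\lambda^{-1}$, $\mathbb A$ by $\lambda^{-1}$, so $\log|\mathbb A|$ shifts by a constant) is correct.

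The gap is exactly where you flagged it, and your proposed repair does not close it. After translating so $p_k^*=0$ and rescaling $u^*_k(\xi^*)=\lambda_k u_k(A_k^{-1}\xi^*)$ with $(u^*_k)_{ij}(0)=\delta_{ij}$, the remaining one-parameter freedom is the choice of $\lambda_k$, and to get $d^*_k\to\infty$ (needed for Theorem~\ref{theorem_5.6}) one must take $\lambda_k$ to be the blow-up rate, e.g.\ $\lambda_k=\|\nabla\log|\mathbb A(u_k)|\|^2_{G_{u_k}}(0)\to\infty$. But then $\mathbb A(u^*_k)=\lambda_k^{-1}\mathbb A(u_k)\to 0$ uniformly, so the would-be limit of $\mathbb A$ on the quadratic $u^*_\infty$ is identically $0$, not a nonzero constant, and $\log|\mathbb A(u^*_k)|\to-\infty$. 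If instead you spend the scaling freedom to keep $|\mathbb A(u^*_k)|(0)$ normalized, then $\lambda_k=|\mathbb A(u_k)|(0)\in[\delta,\mff N_2]$ stays bounded, $d^*_k=\sqrt{\lambda_k}\,d_k$ stays bounded, and the blow-up machinery (Theorems~\ref{theorem_5.6}, \ref{theorem_6.7}, \ref{theorem_6.4}) never kicks in. So ``carrying $|\mathbb A|\geq\delta$ through the renormalization'' is not compatible with $\lambda_k\to\infty$.

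What you should do instead is not try to pass $\mathbb A$ to the limit at all. The hypotheses give a uniform Euclidean bound $|\nabla_\xi\log|\mathbb A(u_k)||\leq\mff N_2/\delta$. The Euclidean gradient transforms by $A_k^{-T}$, so $\nabla_{\xi^*}\log|\mathbb A(u^*_k)|(0)=A_k^{-T}\nabla_\xi\log|\mathbb A(u_k)|(0)$; since $(u^*_k)_{ij}(0)=\delta_{ij}$, the Calabi norm of this at $0$ is the Euclidean norm. On the other hand, the proof of Theorem~\ref{theorem_5.1} (via the shrinking-section argument \eqref{eqn 5.4} and \eqref{eqna_4.1}) already establishes $A_k^{-1}=(b^k_{ij})\to 0$. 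Hence
$\|\nabla\log|\mathbb A(u^*_k)|\|^2_{G_{u^*_k}}(0)\leq\|A_k^{-1}\|^2(\mff N_2/\delta)^2\to 0$,
contradicting the normalization which forces this quantity to equal $1$. This closes the argument without any appeal to the Bernstein theorem or to translation invariance of the limit, and it is precisely where the two hypotheses on $\mathbb A$ — $C^3$-boundedness and the lower bound $\delta$ — are actually used (together they give the Euclidean gradient bound on $\log|\mathbb A|$); neither is needed to make the limit ``a nonzero constant''.
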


\v

\section{A convergence theorem and its application}\label{sect_3.2}

\v
\subsection{\bf A convergence theorem}\label{sect_3.2}
\v
In this section we extend the Theorem 3.6 in \cite{CLS1} to the generalized Abreu equation. Denote by $\mathcal{F}(\Omega^*,C)$ the class of smooth convex functions
defined on $\Omega^*$ such that
$$ \inf_{\Omega^*} {u } = 0,\;\;\;
u= C>0\;\;on\;\;\partial \Omega^*.$$
The main result of this subsection is the following convergence
theorem.
\begin{theorem}\label{theorem_6.1}
Let $\Omega^*\subset \mathbb{R}^2$ be a normalized convex domain. Let $u\indexm\in \mc
F(\Omega^*,1)$ be a sequence of functions and $p^o\indexm$ be
the minimal point of $u\indexm$. Let $\mathbb D_{k}>0$ be given smooth function defined on $\overline{\Omega}^*.$
Suppose that there is $\mff N_{4}>0$ such that
$$|\mathcal{S}_{\mathbb{D}_{k}}(u\indexm)|\leq \mff N_{4},\;\;\;\;\mff N_{4}^{-1} \leq \mathbb D_{k}\leq \mff N_{4}$$
and
$$
 \sup_{\Omega^*}|\nabla_{\xi}\log \mathbb D_{k}|\leq \mff N_{4}.
$$
Then there exists a subsequence
of functions, without loss of generality, still denoted by
$u\indexm$, locally uniformly  converging to a function $u_\infty$ { in $\Omega^*$} and
$p_{k}^o$ converging to $p^o_\infty$ such that $$d_E(p^o_{\infty},\partial\Omega^*)>\mff s$$
for some constant $\mff s>0$ and in
$D_\mff s(p_{\infty}^o)$
$$
\|u\|_{C^{3,\alpha}}\leq\mff  C_6
$$
for some  $\mff C_6>0$ and $\alpha\in (0,1)$.
\end{theorem}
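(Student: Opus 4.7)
The plan is to follow the strategy of Theorem 3.6 in \cite{CLS1}, using the weight hypotheses $\mff N_4^{-1}\leq \mathbb D_k\leq \mff N_4$ and $|\nabla_\xi \log \mathbb D_k|\leq \mff N_4$ to control every extra term introduced by the weight $\mathbb D_k$. Since $\Omega^*$ is normalized and $u_k$ takes values in $[0,1]$, the convex functions $u_k$ are uniformly bounded and uniformly Lipschitz on any compactly contained subdomain; the Blaschke selection principle supplies a subsequence converging locally uniformly in $\Omega^*$ to a convex $u_\infty$, and a further extraction gives $p_k^o\to p_\infty^o\in\overline{\Omega^*}$.

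The heart of the argument is to establish a uniform two-sided bound on $\det(u_{k,ij})$ in a fixed ball around $p_\infty^o$, which simultaneously forces $d_E(p_\infty^o,\partial\Omega^*)\geq \mff s>0$. For the upper bound, apply Lemma~\ref{lemma_4.2} with $C=1$ and $n=2$: on the section $\{u_k<1/2\}$ the exponential factor $\exp(-4/(1-u_k))$ is bounded below, and the uniform Lipschitz bound for $u_k$ on this section makes its Legendre dual $f_k$ uniformly bounded on the image, so $\det(u_{k,ij})$ is bounded above. Combined with the normalization $n^{-3/2}D_1(0)\subset\Omega^*\subset D_1(0)$, a standard convex barrier argument shows the section $\{u_k<1/2\}$ cannot collapse to the boundary, pinning $p_k^o$ at positive distance $\mff s$ from $\partial\Omega^*$. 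For the lower bound, I argue by contradiction via a blow-up modelled on the proof of Theorem~\ref{theorem_5.1}: if $\det(u_{k,ij})(q_k)\to 0$ for some $q_k\to p_\infty^o$, apply an affine rescaling $u_k^\star(\xi^\star)=\lambda_k u_k(A_k^{-1}\xi^\star)$ that normalizes the Hessian at the origin and sets $\Theta_{u_k^\star}(0)=1$. The rescaling factor absorbs the bounds on $\mathcal{S}_{\mathbb{D}_k}(u_k)$ and $|\nabla_\xi \log \mathbb D_k|$, forcing $\mathcal{S}_{\mathbb{D}_k^\star}(u_k^\star)\to 0$ and $\mathbb D_k^\star$ to approach a constant. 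By Theorem~\ref{theorem_5.6} and Theorem~\ref{theorem_6.7}, a subsequence $C^2$-converges to a strictly convex $u_\infty^\star$ on a Euclidean-complete graph with $\mathcal{S}(u_\infty^\star)=0$, and Theorem~\ref{theorem_6.4} together with Remark~\ref{remark_6.5} forces $u_\infty^\star$ to be quadratic, contradicting $\Theta_{u_\infty^\star}(0)=1$.

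With the two-sided determinant bound in hand, Caffarelli's interior $C^{2,\alpha}$ regularity for Monge--Amp\`ere equations produces a uniform $C^{2,\alpha}$ bound on $u_k$ in $D_{\mff s}(p_\infty^o)$. Rewriting the generalized Abreu equation \eqref{eqn 2.9} as a linear fourth-order elliptic equation in $u_k$ with $C^\alpha$ coefficients, where the extra terms carrying derivatives of $\log\mathbb D_k$ are controlled by the weight hypotheses, standard Schauder theory yields the desired bound $\|u_k\|_{C^{3,\alpha}}\leq \mff C_6$. The main obstacle is the combined argument in the middle paragraph: the upper bound from Lemma~\ref{lemma_4.2} carries the weight $(d+f_k)^{2n}$, which must be controlled via a careful gradient estimate on each section, while the lower bound requires adapting the blow-up-and-Liouville scheme of Theorem~\ref{theorem_5.1} to a setting where the weight $\mathbb D_k$ is non-constant but has bounded logarithmic derivative.
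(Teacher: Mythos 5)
Your structure matches the paper's at two of the three key steps: you extract a convergent subsequence by Blaschke, and you invoke Lemma~\ref{lemma_4.2} for an upper bound on $\det(u_{k,ij})$ over the section $\{u_k<1/2\}$, which the paper does as well (rephrased as a \emph{lower} bound on $\det(f_{k,ij})$ in the Legendre dual picture). Your final regularity step via Caffarelli plus a Schauder bootstrap for the fourth-order equation is also morally what Theorem~\ref{theorem_6.2} (cited from \cite{LLS}) packages, though the paper deliberately cites that black-box theorem because Caffarelli's interior $C^{2,\alpha}$ estimate by itself needs a $C^\alpha$ right-hand side, not merely a two-sided bound on the Monge--Amp\`ere measure.

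The genuine gap is in the middle step, the lower bound on $\det(u_{k,ij})$. The paper does \emph{not} do a blow-up here: it passes to the Legendre transform $f_k$, uses the upper bound from Lemma~\ref{lemma_4.2} plus Alexandrov--Pogorelov to conclude $f_\infty$ is strictly convex with a compact section $\bar S_{f_\infty}(0,C_3)$, and then applies Lemma~\ref{lemma_4.1} (a direct maximum-principle argument for $\det(f_{ij})$ on that section, needing only a gradient bound on $f$ and $C^0$ control of $A$ and $\nabla_\xi\log\mathbb D$). Your alternative -- a blow-up modelled on Theorem~\ref{theorem_5.1} -- runs into two problems. First, circularity: the proof of Theorem~\ref{theorem_5.1} in the paper explicitly invokes Theorem~\ref{theorem_6.1} (``It follows from Theorem~\ref{theorem_6.1} below that $u^\star_k$ locally $C^{3,\alpha}$-converges''), so Theorem~\ref{theorem_6.1} must be established independently. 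Second, the Theorem~\ref{theorem_5.1} blow-up normalizes the affine invariant $\Theta$ and derives a contradiction from $\Theta_{u^\star_\infty}(0)=1$ versus $\Theta\equiv 0$; but the hypothesis $\det(u_{k,ij})(q_k)\to 0$ does not imply $\Theta_{u_k}(q_k)\to\infty$, so there is no normalization $\lambda_k=\Theta_{u_k}(q_k)$ to rescale by, and the contradiction does not manifest. You should instead invoke Lemma~\ref{lemma_4.1} directly, as the paper does, after establishing strict convexity of the dual limit via Alexandrov--Pogorelov.
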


To prove Theorem \ref{theorem_6.1},
we need the following lemma, the proof can be find in \cite{LLS}.

\begin{theorem}\label{theorem_6.2}
Let $\Omega\subset \mathbb R^{n}$ be a bounded normalized convex domain. Let $f_{k}\in \mc
F(\Omega,C)$ be a sequence of functions satisfying the equation $\mathcal{S}_{\mathbb{D}_{k}}(f_{k})=A_{k}$.
Suppose that $A_{k}$(resp. $\mathbb D_{k}$) $C^{m}$-converges to $A$ (resp. $\mathbb D>0$)  on $\bar\Omega$, and there are constants $0< \mff N_{5}<\mff N_{6}$ independent of $k$ such that
\begin{equation} \label{equ 4.5}
\mff N_{5}\leq det\left(\frac{\p^2 f_{k}}{\p x^{i}\p x^{j}}\right)\leq \mff N_{6}\end{equation}
hold in $\Omega$. Then there exists a subsequence
of functions, without loss of generality, still denoted by
$f_{k}$,   locally uniformly  converging  to a function $f_\infty$ { in $\Omega$} and, for any open set $\Omega_o$ with $\bar{\Omega}_o\subset \Omega$, and for any $\alpha\in (0,1)$,  $f_{k}$ $C^{m+3,\alpha}$-converges
to $f_\infty$ in $\Omega_o$.
\end{theorem}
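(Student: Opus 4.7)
The plan is to view the fourth-order equation $\mathcal{S}_{\mathbb{D}_k}(f_k) = A_k$ as a decoupled second-order system: the Monge--Amp\`ere equation
\[
\det(f_{k,ij}) = \mathbb{F}_k/\mathbb{D}_k,
\]
whose right-hand side is pinched between two positive constants by hypothesis \eqref{equ 4.5}, together with the linear second-order equation from \eqref{eqn 2.10},
\[
-\sum_{i,j} f_k^{ij}(\log\mathbb{F}_k)_{ij} - \sum_{i,j} f_k^{ij}(\log\mathbb{F}_k)_i(\log\mathbb{D}_k)_j = A_k,
\]
for the unknown $\psi_k := \log\mathbb{F}_k$. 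Regularity will be bootstrapped by alternating between Caffarelli's interior theory for the Monge--Amp\`ere part and Schauder / Krylov--Safonov for the linear part, gaining two derivatives per half-cycle.

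First I would extract a locally uniform $C^0$ limit. Since $\Omega$ is normalized and each $f_k\in \mathcal{F}(\Omega,C)$ is convex with common boundary value $C$ and minimum $0$, the family is uniformly Lipschitz on compact subsets of $\Omega$, so a subsequence converges locally uniformly to a convex $f_\infty$. Caffarelli's theorems for Monge--Amp\`ere with measurable right-hand side pinched between two positive constants then give strict convexity of $f_\infty$ inside $\Omega$ and upgrade the convergence to $C^{1,\alpha}_{loc}$ for some $\alpha\in(0,1)$.

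Next I would produce a uniform two-sided bound on the eigenvalues of $(f_{k,ij})$ on any $\Omega_o \Subset \Omega$. In dimension two, uniform strict convexity of $f_k$ on $\Omega_o$ combined with the two-sided determinant bound gives, via a Pogorelov-type interior second-derivative estimate applied in sections $S_h(f_k)$, a uniform upper bound on the largest eigenvalue; AM--GM and the lower bound on $\det$ then provide a uniform lower bound on the smallest eigenvalue. With this in hand the operator $L_k u := \sum f_k^{ij}u_{ij}$ is uniformly elliptic on $\Omega_o$, so Krylov--Safonov applied to the linear equation yields $\psi_k \in C^{\alpha}_{loc}$ uniformly. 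Equivalently $\det(f_{k,ij}) = e^{\psi_k}/\mathbb{D}_k$ is uniformly $C^{\alpha}_{loc}$, and Caffarelli's interior $C^{2,\alpha}$ theorem for Monge--Amp\`ere promotes $f_k$ to $C^{2,\alpha}_{loc}$. Feeding this back, the linear equation now has $C^{\alpha}_{loc}$ coefficients and a uniformly $C^m$-bounded right-hand side, so Schauder gives $\psi_k\in C^{2,\alpha}_{loc}$; Caffarelli again promotes $f_k$ to $C^{4,\alpha}_{loc}$. Iterating this cycle until the $C^m$ regularity of $A_k$ and $\mathbb{D}_k$ is exhausted yields uniform $C^{m+3,\alpha}_{loc}$ bounds on $f_k$, and Arzel\`a--Ascoli then upgrades the convergence $f_k\to f_\infty$ to $C^{m+3,\alpha}$ on any $\Omega_o\Subset\Omega$.

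The main obstacle is the first bootstrap step: extracting uniform ellipticity of $L_k$ on $\Omega_o$ from only a pointwise determinant bound. This hinges on a Pogorelov-type interior estimate that works \emph{uniformly} in $k$, which in turn requires that the strict convexity produced by Caffarelli be uniform along the sequence, controlled solely by the prescribed $C^0$ data and the constants $\mff N_5,\mff N_6$. A minor secondary point is that each bootstrap cycle forces a slight shrinkage of the subdomain, but this is harmless because the conclusion is asserted only for arbitrary $\Omega_o\Subset\Omega$.
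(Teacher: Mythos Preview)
The paper does not actually prove this theorem: it states it and refers to \cite{LLS} for the argument. Your decoupling strategy---treat the system as a Monge--Amp\`ere equation $\det(D^2 f_k)=\mathbb{F}_k/\mathbb{D}_k$ with pinched right-hand side, coupled to a linear second-order equation for $\psi_k=\log\mathbb{F}_k$, and bootstrap---is the standard route and is essentially what is done in the cited interior-regularity papers (going back to Trudinger--Wang for the classical Abreu equation). So at the level of architecture you are on the right track.

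There is, however, a genuine gap at the step you yourself flag as the main obstacle. You propose to extract uniform ellipticity of $L_k=\sum f_k^{ij}\partial_{ij}$ on $\Omega_o$ via a Pogorelov-type interior second-derivative estimate. But Pogorelov's estimate for $\det(D^2 f)=\phi$ requires quantitative regularity of $\phi$ (at least $C^{1,1}$, with the constant entering the estimate), and at this stage you only know $\mff N_5\le\phi_k\le\mff N_6$; the smoothness of each $\phi_k$ is not uniform in $k$, so the Pogorelov constant is not uniform either. Restricting to $n=2$ does not save this (and the theorem is stated in $\mathbb{R}^n$). The correct tool here is the Caffarelli--Guti\'errez theory \cite{C-G} for the \emph{linearized} Monge--Amp\`ere operator: with $\det(D^2 f_k)$ pinched between two positive constants, solutions of $\sum U_k^{ij}w_{ij}=g$ (cofactor coefficients) satisfy interior H\"older estimates in sections, \emph{without} any uniform ellipticity of $f_k^{ij}$. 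Rewriting your linear equation for $\psi_k$ in cofactor form (multiply through by $\det(D^2 f_k)$, which is bounded), Caffarelli--Guti\'errez gives $\psi_k\in C^{\alpha}_{loc}$ uniformly; then Caffarelli's $C^{2,\alpha}$ theorem for Monge--Amp\`ere promotes $f_k$ to $C^{2,\alpha}_{loc}$, and only \emph{now} is $L_k$ uniformly elliptic with H\"older coefficients, after which your Schauder bootstrap runs exactly as you describe up to $C^{m+3,\alpha}_{loc}$.
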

\v\n
{\bf Proof of Theorem \ref{theorem_6.1}} By Lemma \ref{lemma_4.2} we have uniform estimate in $\Omega^\circ_k:=\{\xi | u_k\leq \frac{1}{2}\}$:
$$\frac{\det(u_{ij})}{(d+f)^{4}}\leq e^8\mff{C}_3$$
for any one $u$ in $\{u_k\}$. Then there is $r>0$ independent of $k$ such that $D_r(0)\subset \nabla u_k(\Omega^\circ_k)$, and there is a constant $d>0$ such that
\begin{equation}\label{eqn_5.5}
\det(f_{ij})\geq \frac{1}{e^8\mff{C}_3(r+d)^{4}},\;\;\;-1\leq f\leq r,\;\;\;\end{equation}
for any one $f$ in $f_k$. Then there exists a subsequence
of functions, without loss of generality, still denoted by
$f_{k}$, locally uniformly   converging  to a function $f_\infty$ in $D_r(0)$. By \eqref{eqn_5.5} and the Alexandrov-Pogorelov theorem $f_\infty$ is strictly convex. Then there is a constant $C_3>0$ such that the section
$$\bar{S}_{f_{\infty}}(0,C_3):=\{x |f_{\infty}\leq C_3\}$$
is compact. By Lemma \ref{lemma_4.1} we have, in the section $\bar{S}_{f_{\infty}}(0,C_3/2)$,
\begin{equation}\label{eqn_5.5}
\det(f_{ij})\leq e^8\mff{C}_2\end{equation}
for any one $f$ in $f_k$. Using Theorem \ref{theorem_6.2} we conclude that $f_{k}$ $C^{3,\alpha}$-converges to $f_{\infty}$ in the section $S_{f_{\infty}}(0,C_3/2)$. It follows that $p^o\indexm$ converges to $p^o_\infty$ and there exist constants $\mff s$ such that $d_E(p\indexm^o,\partial\Omega)>2\mff s$, and $u\indexm$ $C^{3,\alpha}$-converges to $u_{\infty}$
in $D_{\mff s}(p^o_\infty)$. The theorem follows.  $\blacksquare$
\v
\subsection{\bf An application of Theorem \ref{theorem_5.1}}\label{subsect_3.2}
\v

In order to use affine blow-up technique we need to control sections (see Subsection \S7.2\cite{CLS1} and Subsection \S5.3-\S5.4 \cite{CLS2}. For the generalized Abreu equation we need more arguments due to the presence of the function
$\mathbb D$.  The following Theorem \ref{theorem_6.3} will be used in our next works.
\v
Let $\Delta\subset \mathbb R^{2}$ be a Delzant ploytope. Let $\ell$ be an edge of $\Delta$ and
$\ell^\circ$ be the interior of $\ell$.  Let $q\in \ell^{\circ}$. We fix a coordinate system on
$\t^\ast$ such that { (i) $\ell=\{\xi | \xi_1=0\}$,
(ii) $\Delta\subset \{\xi_1>0\}$.}
\v
Let $u\indexm \in \mc C^\infty(\Delta,v)$ be a sequence of
functions with $\mc S_{\mathbb{D}}(u\indexm)=A\indexm$,   let $p^{\circ}_k$ be points such that
\begin{equation}\label{eqn 5.5}
d_{u\indexm}(p^{\circ}_k, \p \Delta)=d_{u\indexm}(p^{\circ}_k, \ell^\circ)\rightarrow 0,
 \end{equation}
 where $d_{u_k}(p^{\circ}_k,\p \Delta)$ is the geodesic distance  from $p^{\circ}_k$ to $\p \Delta$ with respect to the Calabi metric $G_{u_k}.$ Suppose that $A\indexm$   $C^3$-converges to $A$ on $\bar \Delta$.
By the interior regularity (see \cite{LLS}) we have
$u_{k}$ locally uniformly $C^{6,\alpha}$ converges to a strictly convex function $u_{\infty}$ and
$\lim_{k\to \infty} d_{E}(p^{\circ}_k, \ell^\circ)=0.$
 Then  by the $C^{0}$-estimate $u_{k}|_{\ell^{o}}$ locally uniformly to a  convex function $h.$

Let $u$ be one of $u\indexm$.  By adding a linear function we normalize $u$ such that $p^\circ$ is the minimal point of $u$; i.e.,
 \begin{equation}\label{eqnc_7.4}
  u(p^\circ)=\inf u.
 \end{equation}
Let $\check p$ be the minimal point of $u$ on $\ell$.
By a coordinate translation and by adding some constant to $u$, we may require that
\begin{equation}\label{eqnc_7.5}
u(p^{\circ})=0,\;\xi(p^{\circ})=0
\end{equation}
As in the proof of Theorem \ref{theorem_5.1} we have 
\begin{equation}\label{eqn 5.8}
\;\;\;\lim_{k\to\infty}diam \left(S_{u_{k}}(p^{\circ}_{k},u_{k}(\check{p}_{k})-u_{k}(p^\circ_{k})\right)=0.
\end{equation}

We consider the following
affine transformation on $u$:
\begin{equation}\label{eqn_7.1a}
\tilde u(\tilde{\xi})=\lambda u(A\inv(\tilde{\xi})),
\end{equation}
where $ A(\tilde{\xi}_1, \tilde{\xi}_2)=\sum a_i^j \tilde{\xi}_j.$ We choose $\lambda$ and $(a_i^j)$ such that, at $p^{\circ}$,
\begin{equation}\label{eqn_7.1a}
\tilde{u}_{ij}(0)=\delta_{ij}, \;\; d_{\tilde{u}}(p^{\circ}, \p \Delta)=1.
\end{equation}
Then we have
a sequence of functions $\tilde{u}_k$ with $\tilde{A}_k\rightarrow 0$. Denote $\mathcal{H}_k:=\tilde{u}_k(\check p_k)-\tilde{u}_k(p^\circ_k)$.

\begin{theorem}\label{theorem_6.3}
Let $\tilde{u}_k$ be a sequence of functions satisfying $\mc S_{\mathbb{D}}(u\indexm)=A\indexm$ as above. Suppose that
$$\tilde{u}_{ij}(0)=\delta_{ij}, \;\; d_{\tilde{u}}(p^{\circ}, \p \Delta)=1,\;\;\tilde{u}_k(p_k^{\circ})=0,\;\xi(p_k^{\circ})=0.$$
Then there are constants $\mff C_{8}>\mff C_{7}>0$ independent of $k$ such that
$$\mff C_{7}\leq \mathcal{H}_k\leq \mff C_{8}.$$
\end{theorem}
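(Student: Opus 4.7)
The plan is to prove both inequalities by contradiction, using the compactness/convergence package from Sections~4--6. First observe that under the rescaling $\tilde u_k(\tilde\xi)=\lambda_k u_k(A_k\inv\tilde\xi)$, the Calabi metric scales by $\lambda_k$; combining the hypothesis $d_{u_k}(p^\circ_k,\p\Delta)\to 0$ with the normalization $d_{\tilde u_k}(0,\p\tilde\Delta_k)=1$ forces $\lambda_k\to\infty$ and hence $\tilde A_k\to 0$ in the rescaled equation $\mc S_{\mathbb D}(\tilde u_k)=\tilde A_k$. Applying Corollary~\ref{corollaryc_4.2.1} for $\Theta+\mathcal K$ estimates and then Theorem~\ref{theorem_6.1} produces a subsequence along which $\tilde u_k$ converges locally in $C^{3,\alpha}$ on the interior of a limiting domain $\tilde\Delta_\infty$ to a strictly convex $\tilde u_\infty$ with $\tilde u_\infty(0)=0$, $\tilde u_{\infty,ij}(0)=\delta_{ij}$, and $\mc S_{\mathbb D_\infty}(\tilde u_\infty)=0$; moreover, as in the discussion preceding the theorem, $\tilde u_k$ restricted to the (transformed) edge converges locally uniformly to a convex function $h_\infty$ on the interior of the limit edge.

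\emph{Lower bound.} Assume for contradiction $\mathcal H_k\to 0$. Local $C^2$ convergence together with $\tilde u_{\infty,ij}(0)=\delta_{ij}$ yields a quadratic lower bound $\tilde u_k(\xi)\geq c|\xi|^2$ on a fixed Euclidean ball $B_{r_0}(0)$ for all $k$ large. Convexity on the segment $[0,\check p_k]$ gives $\tilde u_k(t\check p_k)\leq t\mathcal H_k$ for $t\in[0,1]$. If $|\check p_k|\geq r_0$, setting $t_k=r_0/|\check p_k|$ yields $c r_0^2\leq t_k\mathcal H_k\leq\mathcal H_k\to 0$, a contradiction. Otherwise $|\check p_k|\to 0$ along a further subsequence; but near $0$ the Calabi metric is uniformly close to the Euclidean metric, so $d_{\tilde u_k}(0,\check p_k)\to 0$, contradicting $\check p_k\in\p\tilde\Delta_k$ together with $d_{\tilde u_k}(0,\p\tilde\Delta_k)=1$.

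\emph{Upper bound.} Assume for contradiction $\mathcal H_k\to\infty$. If a subsequence of $\check p_k$ converges to a point $\check p_\infty$ in the interior of the limit edge, then $\mathcal H_k=\tilde u_k(\check p_k)\to h_\infty(\check p_\infty)<\infty$ by the $C^0$ boundary convergence, contradicting $\mathcal H_k\to\infty$. It remains to exclude the case where $\check p_k$ escapes to a vertex of $\tilde\Delta_\infty$; this is done by combining the Calabi-ball containment $B_{1,\tilde u_k}(0)\subset\tilde\Delta_k$, the determinant bound of Lemma~\ref{lemma_2.3.a}, and the one-dimensional estimate of Lemma~\ref{lemma_4.1} applied to the restriction of $\tilde u_k$ to the transformed edge, confining $\check p_k$ to a uniform compact subset of its interior.

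The main obstacle is this corner-avoidance step in the upper bound: in the affine-transformed coordinates the polytope $\tilde\Delta_k$ itself varies with $k$, so compactness of $\{\check p_k\}$ in the interior of the limit edge must be extracted purely from the interior Hessian normalization at $0$ and the Calabi-distance bound $d_{\tilde u_k}(0,\p\tilde\Delta_k)=1$. Once this confinement is established, the convergence and rigidity machinery of Sections~4--6 delivers both contradictions.
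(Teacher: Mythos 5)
Your lower-bound argument is essentially sound and close in spirit to the paper's: both routes use the $\Theta$-bound of Theorem~\ref{theorem_5.1} in $B_{1/2}(p_k^\circ)$ plus the compactness Theorem~\ref{theorem_5.6} to get local $C^2$-convergence with Hessian $\delta_{ij}$ at $0$, and both then read off that $\tilde u_k$ must reach a definite positive value before the Calabi distance from $0$ can equal $1$. Your phrasing in terms of a quadratic Euclidean lower bound near the origin, and comparison with $d_{\tilde u_k}(0,\check p_k)\geq 1$, is a concrete and valid realization of this. The one thing to tighten is that the region of $C^2$-convergence is guaranteed only in a Calabi ball $B_{\mathfrak{a},u_\infty}(0)$, and the Euclidean radius it covers is what gives you $r_0$; it is cleaner to phrase the second case ($|\check p_k|\to 0$) entirely through the Calabi--Euclidean comparability coming from that convergence, as the paper implicitly does.

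The upper bound is where the proposal breaks down, and you correctly flag the gap yourself: nothing in your sketch actually prevents $\check p_k$ from running off toward a vertex of $\tilde\Delta_k$, and the tools you list do not close it. The combination of Calabi-ball containment, Lemma~\ref{lemma_2.3.a}, and Lemma~\ref{lemma_4.1} does not confine $\check p_k$ to a fixed compact subinterval of the limit edge, because after the normalizing affine map $A_k$ the edge $\tilde\ell_k$ can have unbounded Euclidean length when $\mathcal H_k\to\infty$ (this is precisely the degenerate scenario one must rule out); in addition Lemma~\ref{lemma_4.1} is a two-dimensional statement and is not set up to control the one-dimensional restriction $\tilde u_k|_{\tilde\ell_k}$. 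Likewise the intended $C^0$-boundary convergence of $\tilde u_k|_{\tilde\ell_k}$ to a finite $h_\infty$ presupposes that $\tilde\ell_k$ converges in a controlled way, which is part of what is at stake. The paper's actual proof avoids all of this with a secondary rescaling: assuming $\mathcal H_k\to\infty$, pick $N_k\to\infty$ with $N_k<\mathcal H_k$ and $N_k\max|\tilde A_k|\to 0$, set $\hat u_k=N_k^{-1}\tilde u_k\circ A_k^{-1}$ with a further affine map normalizing $S_{\hat u_k}(\cdot,1)$. Since Calabi distances scale like $N_k^{-1/2}$ while they are affine-invariant, one gets $d_{\hat u_k}(\hat p^\circ_k,\hat\ell_k)\to 0$, yet $\hat A_k\to 0$ and $\sup|\nabla\log\mathbb D_k|\to 0$ (using \eqref{eqn 5.8}) let Theorem~\ref{theorem_5.1} together with the convergence package force this Calabi distance to stay bounded below by some $C_3>0$, a contradiction. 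This rescaling trick is the missing idea in your proposal; without it, the upper bound is not established.
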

\v\n
{\bf Proof.} We first prove $\mathcal{H}_k\geq \mff{C}_7$. By Theorem \ref{theorem_5.1} we have $$\Theta_k\leq 4\mff C_5,\;\;\;\;\; in\;
B_\half(p_k^\circ).$$  Then by Theorem \ref{theorem_5.6} we conclude that
$u_k(\check{p}_k)-u_k(p^\circ_k)\geq \mff{C}_7$.
\v

 Now we prove $\mathcal{H}_k \leq \mff{C}_8$.   Suppose that  $ u\indexm(\check p \indexm) $ has no upper
bound. Then we can choose a sequence  of constants $N\indexm\to
\infty$ such that
$$
 0<N\indexm<u\indexm(\check p \indexm),\;\;
\lim_{k\to \infty} N\indexm \max|\tilde A_{k}|=0.  $$ For each
$u\indexm$ we take an affine transformation $\hat A\indexm:=(A\indexm,
(N\indexm)\inv)$ to get a new function $\hat{u}\indexm$, i.e,
$$
\hat  u\indexm= (N\indexm)\inv\tilde  u\indexm\circ (A\indexm)\inv.
$$
such that $S_{\hat u\indexm}(A\indexm p\indexm^\circ, 1)$  is normalized. Then from \eqref{eqn 5.8} we conclude that
$$
\lim_{k\to\infty}\sup_{S_{\hat u\indexm}(A\indexm p\indexm^\circ, 1)}|\nabla \log \mathbb D_{k}|\rightarrow 0.
$$
Obviously,
$$ \lim_{k\to \infty}\hat A_{k} =
\lim_{k\to \infty}N\indexm\tilde A_{k}= 0,$$
$$\lim_{k\to\infty} d_{\hat{u}\indexm}(A\indexm p^\circ\indexm,
A\indexm\ell) =\lim_{k\to\infty}\frac{1}{\sqrt{N\indexm}}
 d_{u\indexm}(p^\circ,\ell)=0. $$
On the other hand,  by Theorem  \ref{theorem_5.1} we conclude that
 $\hat{u}\indexm$ locally  $C^{3,\alpha}$-converges to a
  strictly convex function $\hat{u}_\infty$  in a neighborhood of the minimal point of $\hat{u}_\infty$.
In particular, there is a constant $C_3>0$ such that
  $$
d_{\hat{u}\indexm}(A\indexm p^\circ\indexm, A\indexm\ell)\geq
C_3.  $$
 We get a contradiction.
Hence, we prove the lemma. $\blacksquare$

\v\v\v\v


\begin{thebibliography}{00}

\bibitem{Abreu1998} M. Abreu, \emph{K\"ahler geometry of toric varieties and extremal metrics},
Internat. J. Math., 9(1998), 641-651.

\bibitem{ACGF}V. Apostolov, D. Calderbank, P. Gauduchon, C. T$\rm\varnothing$nnesen-Friedman, \emph{Extremal K\"ahler metrics on projective bundles over a curve,}   Adv. Math. 227 (2011), no. 6, 2385¨C2424.

\bibitem{AM} V. Apostolov, G. Maschler, \emph{Conformally K\"ahler, Einstein-Maxwell Geometry}, arXiv:1512.06391. 

\bibitem{C1} L. A. Caffarelli, \emph{Interior $W^{2,p}$ estimates for solutions of
Monge-Amp\`{e}re equations}, Ann. Math.,
131(1990), 135-150.

\bibitem{C-G}  L. A. Caffarelli, C. E Guti\'{e}rrez, \emph{Properties of the
solutions of the linearized Monge-Amp\`{e}re equations},
Amer. J. Math., 119(1997), 423-465.

\bibitem{CHLLS} B. Chen, Q. Han, A.-M. Li, Z. Lian, L. Sheng, Prescribed scaler curvatures for homogeneous toric bundles,
Preprint.

\bibitem{CLS5} B. Chen, A.-M. Li, L. Sheng,
\emph{The Abreu equation with degenerated boundary conditions},
J. Diff. Equations, 252(2012), 5235-5259.

\bibitem{CLS3}
B. Chen, A.-M. Li, L. Sheng,
\emph{Interior regularity on the Abreu equation},
Acta Mathematica Sinica, 29(2013), 33-38.

\bibitem{CLS1}
B. Chen, A.-M. Li, L. Sheng,
\emph{Affine techniques on extremal metrics on toric surfaces},
 arXiv:1008.2606.

\bibitem{CLS2}
B. Chen, A.-M. Li, L. Sheng,
\emph{Extremal metrics on toric surfaces},
 arXiv:1008.2607.



\bibitem{CLS4} B. Chen, A.-M. Li, L. Sheng,
\emph{Uniform $K$-stability for extremal metrics on toric varieties}, arXiv:1109.5228v2.


\bibitem{D1} S. K. Donaldson, \emph{Scalar curvature and stability
of toric varieties}. J. Diff. Geom.,  62(2002), 289-349.

\bibitem{D2} S. K. Donaldson, \emph{Interior estimates for solutions of Abreu's equation},
Collect. Math., 56(2005), 103-142.

\bibitem{D3} S. K. Donaldson, \emph{Extremal metrics on toric surfaces: a continuity method},
J. Diff. Geom., 79(2008), 389-432

\bibitem{D4} S. K. Donaldson, \emph{Constant scalar curvature metrics on toric surfaces},
Geom. Funct. Anal., 19(2009), 83-136.

\bibitem{D5} S.K. Donaldson, {\em K\"ahler geometry on toric manifolds, and some other manifolds with large symmetry.} Handbook of Geometric Analysis, No. 1, 2008.

 

\bibitem{FengSzeke} R. Feng, G. Sz\'ekelyhidi, \emph{Periodic solutions of Abreu's equation},
Matt. Res. Lett., 18(2011), 1271-1279.

\bibitem{Guillemin1994} V. Guillemin, \emph{K\"ahler structures on toric varieties},
J. Diff. Geom., 40(1994), 285-309.


\bibitem{LJSX} A.-M. Li,  R. Xu,  U. Simon, F. Jia, \emph{Affine Bernstein
Problems and Monge-Amp\`{e}re Equations}, World Scientific, 2010.

\bibitem{LJ} A.-M. Li, F. Jia, \emph{A Bernstein properties of some fourth order
partial differential equations}, Result. Math., 56 (2009), 109-139.

\bibitem{LLS}  A.-M. Li, Z. Lian, L. Sheng, {\em Interior regularity for the generalized Abreu equation}, Preprint.


\bibitem{N-1}  T. Nyberg,  Constant Scalar Curvature of Toric Fibrations. PhD
thesis.

\bibitem{PS} Podesta, Spiro, Kahler-Ricci solitons on homogeneous toric bundles I, II Arxiv DG/0604070/0604071

\bibitem{R}  A. Raza. Scalar curvature and multiplicity-free actions. PhD thesis.





\bibitem{Sz} G. Szi\'ekelyhidi, \emph{Extremal metrics and K-stability},
Bull. London Math. Soc.,  39(2007), 76-84.


\bibitem{T2} G. Tian, \emph{Canonical Metrics in K\"ahler Geometry},
Lectures in Mathematics ETH
Zurich,  Birkh\"auser Verlag, Basel, 2000.



\bibitem{TW2002} N. S. Trudinger, X. Wang, \emph{Berstein-J\"orgens theorem
for a fourth order partial differential equation}, J. Partial Diff. Equations, 15(2002),
78-88.








 \end{thebibliography}
 \end{document}